\numberwithin{equation}{subsection}
\theoremstyle{definition}
\newtheorem{theorem}[equation]{Theorem}
\newtheorem{lemma}[equation]{Lemma}
\newtheorem{proposition}[equation]{Proposition}
\newtheorem{corollary}[equation]{Corollary}
\newtheorem{definition}[equation]{Definition}
\newtheorem{remark}[equation]{Remark}
\newtheorem{claim}[equation]{Claim}
\newcommand{\ad}{\operatorname{ad}}
\newcommand{\Fg}{\mathcal{F}}
\newcommand{\HH}{\mathsf{HH}}
\newcommand{\HHH}{\mathcal{H}}
\newcommand{\HP}{\mathsf{HP}}
\newcommand{\ZZ}{\mathbb{Z}}
\newcommand{\PP}{\mathbb{P}}
\newcommand{\C}{\mathbb{C}}
\newcommand{\OO}{\mathcal{O}}
\newcommand{\Hom}{\text{Hom}}
\newcommand{\Ext}{\text{Ext}}
\newcommand{\g}{\mathfrak{g}}
\newcommand{\SL}{\mathsf{SL}}
\newcommand{\GL}{\mathsf{GL}}
\newcommand{\Sp}{\mathsf{Sp}}
\newcommand{\gr}{\operatorname{\mathsf{gr}}}
\newcommand{\Weyl}{\mathsf{Weyl}}
\newcommand{\onto}{\twoheadrightarrow}
\newcommand{\into}{\hookrightarrow}
\newcommand{\Sym}{\operatorname{\mathsf{Sym}}}
\newcommand{\ctr}{\lrcorner\,\,}
\newcommand{\rx}{\mathrm{x}}
\newcommand{\ry}{\mathrm{y}}
\begin{document}
\title{Zeroth Poisson homology of symmetric powers of isolated 
quasihomogeneous surface singularities}
\author{Pavel Etingof and Travis Schedler}
\date{July 9, 2009}
\begin{abstract}
  Let $X \subset \C^3$ be a surface with an isolated singularity at
  the origin, given by the equation $Q(x,y,z)=0$, where $Q$ is a
  weighted-homogeneous polynomial. In particular, this includes the
  Kleinian surfaces $X = \C^2/G$ for $G < \SL_2(\C)$ finite.  Let
  $Y := S^n X$ be the $n$-th symmetric power of $X$.  We compute the
  zeroth Poisson homology $\HP_0(Y)$, as a graded vector space with
  respect to the weight grading.  In the Kleinian case, this confirms
  a conjecture of Alev, that
  $\HP_0(\C^{2n}/(G^n \rtimes S_n)) \cong \HH_0(\Weyl_{2n}^{G^n
    \rtimes S_n})$,
  where $\Weyl_{2n}$ is the Weyl algebra on $2n$ generators. That is,
  the Brylinski spectral sequence degenerates in this case. In the
  elliptic case, this yields the zeroth Hochschild homology of
  symmetric powers of the elliptic algebras with three generators
  modulo their center, $A_\gamma$, for all but countably many
  parameters $\gamma$ in the elliptic curve.
\end{abstract}
\maketitle
\tableofcontents

\section{Introduction}
\subsection{Main result}\label{mrs}
Let $a, b, c$ be positive integers, and equip $\C[x,y,z]$ with a
weight grading in which $|x|=a, |y|=b$, and $|z|=c$.  In this paper,
we are interested in surfaces $X \subset \C^3$ with an isolated
singularity at the origin, cut out by a polynomial $Q(x,y,z) = 0$,
which is weighted-homogeneous of degree $d$.  Such surfaces were first
studied systematically by Saito \cite{Sa}. For convenience,
we also assume that $a \leq b \leq c$.

The surface $X$ is equipped with a standard Poisson bracket, given by
the bivector
\begin{equation}
\pi := (\frac{\partial}{\partial x} \wedge \frac{\partial}{\partial y} \wedge \frac{\partial}{\partial z}) \ctr (dQ),
\end{equation}
where $\ctr$ is the natural contraction operation, which in this case
produces  a bivector from a trivector and a one-form.  The above
bivector is, moreover, weight-homogeneous of degree $\kappa := d-(a+b+c)$, and
is a Poisson bivector (i.e., $\{\pi, \pi\} = 0$, where $\{\,, \}$ is
the Schouten-Nijenhuis bracket). Hence it produces a Poisson bracket
of degree $\kappa$.

In particular, when $\kappa < 0$, $X$ has a Kleinian singularity, i.e.,
$X \cong \C^2 / G$ where $G < \SL_2(\C)$ is a finite subgroup. These finite subgroups have a well-known classification, and up to equivalence, we have:
\begin{gather} \label{amdesc}
A_{m-1}: G = \ZZ/m, a=2, b=c=m, Q = x^m + y^2 + z^2, \\
D_{m+2}: G = \widetilde{D_{2m}}, a=2, b=m, c=m+1, Q = x^{m+1} + x y^2 + z^2, \\
E_6: G = \widetilde{A_4}, a=3, b=4, c=6,   Q = x^4 + y^3 + z^2, \\
E_7: G = \widetilde{S_4}, a=4, b=6, c=9,  Q = x^3 y + y^3 + z^2, \\
E_8: G = \widetilde{A_5}, a=6, b=10, c=15, Q = x^5 + y^3 + z^2. \label{e8desc}
\end{gather}
Here we set the degree $\kappa$ of the bracket to $-2$ in the $A$ case and $-1$ for the $D$ and $E$ cases.

The case $\kappa = 0$ (i.e., $d=a+b+c$) is called the \emph{elliptic}
case, and, up to equivalence, the surface has one of
the following forms, for some $\lambda \in \C^\times$:
\begin{gather}
\widetilde E_6: a=b=c=1, Q= x^3 + y^3 + z^3 + \lambda xyz, \\
\widetilde E_7: a=b=1, c=2, Q = x^4 + y^4 + z^2 + \lambda xyz,\\
\widetilde E_8: a=1, b=2, c=3, Q = x^6 + y^3 + z^2 + \lambda xyz.
\end{gather}

Let $X^{(n)} := S^n X$ be the $n$-th symmetric power of $X$, which is a
singular affine Poisson variety.
In this paper, we compute explicitly the zeroth Poisson homology of $X^{(n)}$, as a graded vector space using the weight grading.  To describe this, recall the Jacobi ring of $X$,
\begin{equation}
J_Q := \C[x,y,z] / (Q_x, Q_y, Q_z),
\end{equation}
where $Q_x, Q_y$, and $Q_z$ are the partial derivatives of $Q$ with
respect to $x, y,$ and $z$, respectively.
Then, $J_Q$ is finite-dimensional, and its dimension is called the
\emph{Milnor number}, and denoted by $\mu_Q$.

For any graded vector space $V$ with finite-dimensional graded
components, let $h(V;t)$ denote its Hilbert series.

Recall that, for any Poisson algebra $A$, its zeroth Poisson homology
is defined as
\begin{equation}
\HP_0(A) := A / \{A, A\}.
\end{equation}
In \cite{AL}, Alev and Lambre showed that
\begin{equation}
\HP_0(\OO_X) \cong J_Q,
\end{equation}
as weight-graded vector spaces.  Here $\OO_X$ denotes the global functions on $X$ (which is affine).

It will be convenient to combine the linear duals of the homology groups
$\HP_0(\OO_{X^{(n)}})$, for $n \geq 0$, into one bigraded algebra
$\bigoplus_{n \geq 0} \HP_0(\OO_{X^{(n)}})^*$, with the multiplication
given by the symmetrization maps
$\OO_{X^{(i)}}^* \otimes \OO_{X^{(j)}}^* \rightarrow \OO_{X^{(i+j)}}^*$. The main result
of this paper is
\begin{theorem} \label{mt} $\bigoplus_{n \geq 0} \HP_0(\OO_{X^{(n)}})^*$ is isomorphic, as a bigraded algebra, to a free commutative algebra generated by a bigraded vector space $L$ with Hilbert series
\begin{equation} \label{mtfla}
h(L;t^{-1},s) = \frac{h(J_Q;t)s}{1 - t^{d} s}.
\end{equation}
\end{theorem}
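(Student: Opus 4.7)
The plan is to show $B := \bigoplus_n \HP_0(\OO_{X^{(n)}})^* \cong S(L)$ by computing the Hilbert series of $B$, constructing explicit generators, and matching the two. First I would compute the full Poisson homology $\HP_*(\OO_X)$ via the Brylinski complex, exploiting that $(Q_x, Q_y, Q_z)$ forms a regular sequence in $\C[x,y,z]$ by the isolated-singularity hypothesis; this extends Alev--Lambre's identification $\HP_0(\OO_X) \cong J_Q$ to all degrees. The K\"unneth-type identity $\HP_0(A_1 \otimes A_2) \cong \HP_0(A_1) \otimes \HP_0(A_2)$ for commutative Poisson algebras (valid since brackets across tensor factors vanish, so that $\{A_1 \otimes A_2, A_1 \otimes A_2\} = \{A_1,A_1\} \otimes A_2 + A_1 \otimes \{A_2,A_2\}$) then gives $\HP_0(\OO_{X^n}) = J_Q^{\otimes n}$, with an analogous identity for higher $\HP_*$.

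Second, I would pass from $\OO_{X^n}$ to $\OO_{X^{(n)}} = \OO_{X^n}^{S_n}$ via an $S_n$-spectral sequence. The critical subtlety is that $\HP_0(\OO_{X^{(n)}})$ is \emph{strictly larger} than $(\HP_0(\OO_{X^n}))^{S_n} = S^n J_Q$, because the brackets of $S_n$-invariants form a proper subspace of the $S_n$-invariant brackets in $\OO_{X^n}$. The extra classes in $\HP_0(\OO_{X^{(n)}})$ should arise from higher $\HP_i(\OO_X)$ terms contributing through the sign action of $S_n$ on $\HP_*(\OO_X)^{\otimes n}$, and the weight grading by $\deg Q = d$ combined with the regular-sequence structure should force the spectral sequence to degenerate into the claimed clean product.

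Third, I would construct explicit generators: for each basis element $e \in J_Q$ and $n \geq 1$, exhibit a class $\ell_{e, n} \in B_n$ of bidegree $(-(\deg e + (n-1)d), n)$, localized on a formal neighborhood of the small diagonal $\Delta_n \cong X \hookrightarrow X^{(n)}$. The weight shift by $(n-1)d$ is produced by a Koszul representative involving $Q^{n-1}$ in the transverse directions, with the direction of $e$ accounting for the factor $\HP_0(\OO_X) = J_Q$. Combining the Hilbert series match from step 2 with surjectivity of the resulting algebra map $S(L) \to B$ then yields the isomorphism.

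The main obstacle will be step 2: controlling the $S_n$-spectral sequence, accounting for sign-action contributions from higher $\HP_i(\OO_X)$, and extracting the clean product formula $\prod_{n,w} (1 - t^{-(w + (n-1)d)} s^n)^{-\dim J_Q[w]}$ from cancellations specific to isolated quasihomogeneous surface singularities. The regular-sequence property of the Jacobian ideal and the single weight $d$ governing all shifts are essential ingredients. Once the Hilbert series is in hand, step 3 should be largely routine, amounting to a local computation near the small diagonal together with an inductive surjectivity check on sym-degree.
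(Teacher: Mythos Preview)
Your K\"unneth identity $\HP_0(\OO_{X^n}) \cong J_Q^{\otimes n}$ is correct, and you are right that $\HP_0(\OO_{X^{(n)}})$ is strictly larger than its $S_n$-invariant quotient $S^n J_Q$. The genuine gap is precisely where you locate your ``main obstacle'': the passage from $\OO_{X^n}$ to $\OO_{X^{(n)}} = (\OO_{X^n})^{S_n}$. You invoke an ``$S_n$-spectral sequence'' with extra classes fed in by higher $\HP_i(\OO_X)$ via sign actions, but you do not say what filtration or double complex produces it, and no such tool is standard. Orbifold/twisted-sector decompositions of this shape exist for $\HH_*$ of smash products (and the paper cites exactly this, Theorem~\ref{eothm} from \cite{EO}, on the \emph{Hochschild} side, where simplicity of the deformed algebra is the engine), and for Poisson homology of quotients of \emph{smooth symplectic} varieties by finite groups. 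Here $X$ is itself singular, K\"ahler differentials of $\OO_{X^n}$ and of its $S_n$-invariants are not related in any controlled way, and there is no general mechanism computing $\HP_*(A^G)$ from $\HP_*(A)$ with its $G$-action. The partition-indexed answer you anticipate is exactly what the theorem asserts, but hoping a yet-unspecified spectral sequence will degenerate to it is not a proof.

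The paper's route is entirely different and sidesteps this obstacle. It first shows (Proposition in \S1.3) that $\bigoplus_n \HP_0(\OO_{X^{(n)}})^* \cong \C[\OO_{\widehat X}]^{\g_X}$, the polynomial functionals on the completed ring invariant under all Hamiltonian vector fields; this folds the whole symmetric-power tower into one invariant-theory problem, with the $s$-grading recovered as polynomial degree. The invariants are then computed by a slice argument for the group $\exp(\g_X)$ acting on $\OO_{\widehat X}$: in type $A_{m-1}$ an explicit slice is written down by hand (\S\ref{ampfsec}); in all other types the slice is encoded as the total space of a $\C^*$-equivariant pro-vector bundle on $\PP^1$ (Lemma~\ref{vbdlelemma}), whose equivariant decomposition is determined by solving a chain of differential equations over $\C[[x,y,z]]/(Q)$ (Proposition~\ref{mttprop}). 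Your intuition that $Q^{n-1}$ should govern the degree-$n$ generators is vindicated---multiples of $Q^{q-1}$ appear at the end of that proof---but they arise from the bundle computation, not from small-diagonal geometry, and no K\"unneth formula, $S_n$-invariants, or spectral sequence enters.
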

Here, the exponent of $t$ is the weight, and the
exponent of $s$ is the corresponding symmetric power of $X$.  Note
that there is a $t^{-1}$ since, by convention, the weights are negated
when we take the dual.

We may thus write the Hilbert series of
$\bigoplus_{n \geq 0} \HP_0(\OO_{X^{(n)}})$ itself by the following
formula. Write $h(J_Q;t) = t^{n_1} + \cdots + t^{n_r}$ (in the
Kleinian case, the numbers $m_i := n_{i}+1$ (in types $D,E$) or
$m_i := \frac{n_i}{2}+1$ (in type $A$) are the Coxeter exponents
associated to the corresponding finite Weyl group of type ADE, which has Coxeter number  $h = d$ (types $D, E$) or $h = \frac{d}{2}$ (type $A$)).
\begin{equation}
  h(\bigoplus_{n \geq 0} \HP_0(\OO_{X^{(n)}});t,s)= \prod_{i=1}^r \prod_{j \geq 0} \frac{1}{1-t^{n_i + jd} s^{j+1}}. 
\end{equation}
\subsection{Hochschild homology of deformations and Alev's conjecture}
In the Kleinian case, i.e., \eqref{amdesc}--\eqref{e8desc}, 
$X \cong \C^2/G$ for the listed finite subgroup
$G < \SL_2(\C)$. Using this, one has a canonical quantization of
$\OO_{X^{(n)}}$, namely $\Weyl_{2n}^{G^n \rtimes S_n} = \Sym^n \Weyl_2^{G}$, where
the Weyl algebras are defined as
$\Weyl_2 = \C \langle x,y \rangle / ([x,y]-1)$ and
$\Weyl_{2n} = \Weyl_2^{\otimes n}$, and $G^n \rtimes S_n$ is the
semidirect product where $S_n$ acts by permuting components (i.e., the
wreath product of $G$ with $S_n$), viewed as a subgroup of
$\Sp_{2n}(\C)$.  

Here, by ``quantization'' of a graded Poisson algebra $B$ with bracket
of degree $-f$ for some $f > 0$, we mean a filtered associative
algebra $A = \bigcup_m A_{\leq m}$ such that $\gr(A) = B$, and such
that, for every $a \in A_{\leq m}, b \in A_{\leq n}$, we have
$ab - ba \in A_{\leq(m+n-f)}$ and the image of $ab - ba$ in
$\gr_{m+n-f}(A)$ is $\{\gr_m(a), \gr_n(b)\}$.  (We could alternatively
define this as a deformation quantization satisfying a homogeneity
property.)

In this situation, there is a standard Brylinski spectral sequence
from $\HP_0(B)$ to $\HH_0(A) := A /[A,A]$. Moreover, we may equip
$\HH_0(A)$ with the weight filtration from $A$, and it is easy to see
that this spectral sequence preserves the grading, in the sense that
each page consists of homogeneous differentials. Thus, the spectral
sequence converges to $\gr \HH_0(A)$.

One may ask whether the Brylinski spectral sequence degenerates.  As a
consequence of our main theorem, we may deduce the
\begin{theorem}\label{kleindeg}
In the case that $B = \OO_{X^{(n)}}$ for $X = \C^2/G$ a Kleinian singularity listed in \eqref{amdesc}--\eqref{e8desc}, and $A = \Weyl_{2n}^{G^m \rtimes S_m}$, the Brylinski spectral sequence $\HP_0(B) \Rightarrow \gr \HH_0(A)$ degenerates, yielding an isomorphism of graded vector spaces, $\HP_0(B) \cong \gr \HH_0(A)$.
\end{theorem}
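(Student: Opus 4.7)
The plan is the standard degeneration-by-dimension-count argument. Write $B = \OO_{X^{(n)}}$, $A = \Weyl_{2n}^{\Gamma_n}$, where $\Gamma_n := G^n \rtimes S_n$. The Brylinski spectral sequence provides a natural weight-graded surjection $\HP_0(B) \twoheadrightarrow \gr \HH_0(A)$ coming from the edge map. Since each weight-graded component of $\HP_0(B)$ is finite-dimensional by Theorem~\ref{mt}, the spectral sequence degenerates if and only if the Hilbert series of the two sides coincide. Thus the task reduces to computing $h(\HH_0(A); t)$ and matching it against the formula of Theorem~\ref{mt}.

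For this, I would invoke the Alev--Farinati--Lambre--Solotar (AFLS) theorem, combined with the Morita equivalence $\Weyl_{2n}^{\Gamma_n} \sim \Weyl_{2n} \# \Gamma_n$ (valid since $\Gamma_n$ acts faithfully on $\C^{2n}$). AFLS identifies $\HH_0(\Weyl_{2n} \# \Gamma_n)$ with a vector space having basis indexed by the conjugacy classes $[\gamma]$ of $\Gamma_n$ with $(\C^{2n})^\gamma = 0$. Since $G < \SL_2(\C)$ is finite, every non-identity element of $G$ has no nonzero fixed vector in $\C^2$, so the condition $(\C^{2n})^\gamma = 0$ reduces to the requirement that no cycle of the underlying permutation of $\gamma$ have identity cycle-product. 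These classes are parametrized by tuples of partitions labeled by the non-trivial conjugacy classes of $G$; via the McKay correspondence, the latter are in bijection with the weight-homogeneous generators of $J_Q$.

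The key remaining step is to identify, on the associated graded of $\HH_0(A)$, the weight of each conjugacy class's contribution. The expected result is that a cycle of length $j+1$ labeled by a $G$-conjugacy class whose associated $J_Q$-generator has weight $n_i$ contributes weight $n_i + jd$. I would establish this by reducing to the $n=1$ case, where the quantum analog of the Alev--Lambre isomorphism gives $\HH_0(\Weyl_2^G) \cong J_Q$ as weight-graded vector spaces and handles the $j=0$ factors; the extra $jd$ in the weight arises from the $j$ ``Casimir-like'' factors that appear when lifting a cycle class representative through the weight filtration. Multiplying these contributions across cycles and conjugacy classes yields
\[
h\!\left(\bigoplus_{n \geq 0}\HH_0(\Weyl_{2n}^{\Gamma_n}); t, s\right) = \prod_{i=1}^r \prod_{j \geq 0} \frac{1}{1 - t^{n_i + jd}\, s^{j+1}},
\]
matching the Hilbert series of $\bigoplus_n \HP_0(\OO_{X^{(n)}})$ from Theorem~\ref{mt}, and hence forcing the Brylinski spectral sequence to degenerate.

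The principal obstacle is the weight-refined enumeration in the previous paragraph: AFLS gives the total dimension and its decomposition by conjugacy class for free, but computing the weight contribution of each class on the associated graded requires either explicit PBW-style lifts of the class representatives through the weight filtration of $\Weyl_{2n}^{\Gamma_n}$, or an appeal to analogous known computations for wreath-product spherical and Cherednik algebras. Once this weight bookkeeping is in place, the matching of Hilbert series and hence degeneration is automatic.
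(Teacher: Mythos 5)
Your route via the AFLS count of ``bad'' conjugacy classes of the wreath product $G^n\rtimes S_n$ is a legitimate alternative, and in fact the paper explicitly acknowledges it (``one uses \cite{AFLS}, which gives a general formula for the dimension of $\HH_0(\Weyl_{2n}^G)$\dots''). The paper, however, deliberately sidesteps the wreath-product bookkeeping by using \cite[Cor.~3.3]{EO} (stated here as Theorem~\ref{eothm}): since $\Weyl_2^G$ is simple and infinite-dimensional with trivial center, that result gives $\bigoplus_n\HH_0(\Sym^n \Weyl_2^G)\cong\Sym(\HH_0(\Weyl_2^G)[t])$ as a polynomial coalgebra, and then only the $n=1$ identification $\HH_0(\Weyl_2^G)\cong J_Q$ (which does follow from AFLS) is needed to match dimensions against Theorem~\ref{mt}. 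So: same general strategy (reduce degeneration to a dimension match, using the always-present surjection $\HP_0(B)\onto\gr\HH_0(A)$), different key input (the symmetric-power structure theorem of \cite{EO} versus the conjugacy-class enumeration for $G^n\rtimes S_n$).

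That said, what you flag as the ``principal obstacle'' --- computing, conjugacy class by conjugacy class, the weight contribution to $\gr\HH_0(A)$ --- is not actually needed, and trying to carry it out would be a substantial detour. For each fixed $n$, both $\HP_0(\OO_{X^{(n)}})$ and $\HH_0(\Weyl_{2n}^{G^n\rtimes S_n})$ are finite-dimensional, and the edge map of the Brylinski spectral sequence is already a \emph{weight-graded} surjection $\HP_0(B)\onto\gr\HH_0(A)$. A surjection of finite-dimensional vector spaces with equal total dimension is an isomorphism, and since the surjection is graded, the resulting isomorphism is automatically graded. Hence it suffices to check that the AFLS count of admissible conjugacy classes of $G^n\rtimes S_n$ equals $\dim\HP_0(\OO_{X^{(n)}})$; setting $t=1$ in the Hilbert-series formula of Theorem~\ref{mt} gives $\prod_{j\ge 1}(1-s^j)^{-r}$, whose $s^n$-coefficient counts $r$-colored partitions of $n$, i.e.\ exactly the multisets of (cycle length, non-identity $G$-class) you describe. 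The proposed weight formula $n_i+jd$ for a length-$(j+1)$ cycle then emerges as a \emph{consequence} of degeneration, not as a prerequisite, so the ``Casimir-like factors'' heuristic and appeals to Cherednik-algebra computations can be dropped entirely.
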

This confirms a conjecture of J. Alev \cite[Remark 40]{Bu}. In the
case where $G = \ZZ/2 = \{\pm \operatorname{Id}\} \subset \Sp_2(\C)$,
this was proved in the case $n = 2$ in \cite{AlFo}, and for $n=3$ in
\cite{Bu}, where also some preliminary results and conjectures are
given towards general $n$ (again for $G = \ZZ/2$). 

We remark that Alev also posed a similar conjecture, which replaces
$G^n \rtimes S_n$ as above with finite Weyl groups
$W < \GL_n(\C) \into \Sp_{2n}(\C)$, where the embedding is given by
$A \mapsto \begin{pmatrix} A & 0 \\ 0 & (A^{t})^{-1} \end{pmatrix}$.
The case $(\ZZ/2)^n \rtimes S_n$ then identifies with the Weyl groups
of type $B_n$ (and also with type $C_n$).  We do not know whether this
conjecture holds for general Weyl groups of types other than $B$ (or
$C$), although it was verified in types $D_2 = A_1 \times A_1$ and
$D_3 = A_3$ in \cite{Bu}.

To deduce Theorem \ref{kleindeg} from Theorem \ref{mt}, one uses
\cite{AFLS}, which gives a general formula for the dimension of
$\HH_0(\Weyl_{2n}^G)$ for arbitrary $n$ and $G < \Sp_{2n}(\C)$.
However, we will use only the $n=1$ case and a general result from
\cite[\S 3]{EO} to make this more transparent.

  In the non-Kleinian cases, the Poisson bracket does not have
  negative degree, so the above does not apply. However, following
  \cite{EGdelpezzo} (e.g., Theorems 3.4.4 and 3.4.5), one may always
  produce a deformation quantization of $\OO_X$, i.e, a
  $\C[[\hbar]]$-algebra $A_\hbar$ which is isomorphic to
  $\OO_X[[\hbar]]$ as a $\C[[\hbar]]$-module, such that
  $A_\hbar/(\hbar) \cong \OO_X$ and such that
  $[a,b] = \hbar \{a,b\} + O(\hbar^2)$ for all
  $a,b \in \OO_X \subset \OO_X[[\hbar]] \cong A_\hbar$. 
 For such a
  deformation quantization, we may similarly deduce the
  \begin{theorem} \label{defdeg} The Brylinski spectral sequence
    $\HP_0(\OO_{X^{(n)}})((\hbar)) \Rightarrow \gr \HH_0(\Sym^n
    A_\hbar[\hbar^{-1}])$ degenerates.
\end{theorem}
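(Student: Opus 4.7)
The plan is to mimic the argument for Theorem~\ref{kleindeg}, dispensing with the Kleinian-specific input of \cite{AFLS}. The Brylinski spectral sequence always furnishes the term-by-term upper bound
\[
h\bigl(\gr \HH_0(\Sym^n A_\hbar[\hbar^{-1}]);\,t\bigr) \;\leq\; h\bigl(\HP_0(\OO_{X^{(n)}});\,t\bigr)
\]
of graded $\C((\hbar))$-vector spaces, so degeneration is equivalent to equality of Hilbert series in every weight.

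To produce the matching lower bound, I would transport the free commutative algebra structure of Theorem~\ref{mt} to the Hochschild side. The symmetrization maps $\Sym^i A_\hbar \otimes \Sym^j A_\hbar \to \Sym^{i+j} A_\hbar$ endow $\bigoplus_n \HH_0(\Sym^n A_\hbar[\hbar^{-1}])^*$ with a bigraded $\C((\hbar))$-algebra structure whose associated graded, with respect to the Brylinski filtration, admits a surjection from the free commutative $\C((\hbar))$-algebra on $L \otimes \C((\hbar))$ by Theorem~\ref{mt}. I would then lift each generator of $L$ to a well-defined class in the appropriate $\HH_0(\Sym^{j+1} A_\hbar[\hbar^{-1}])$ by quantizing the Poisson-side construction used in the proof of Theorem~\ref{mt}; in particular the $s$-degree-one generators lift to classes in $\HH_0(A_\hbar[\hbar^{-1}])$ via the Alev--Lambre identification $\HP_0(\OO_X) \cong J_Q$ from \cite{AL}. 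Assembling these lifts yields a map from the free commutative $\C((\hbar))$-algebra on $L \otimes \C((\hbar))$ into the Hochschild side, whose induced map on Brylinski $E_\infty$ recovers the isomorphism of Theorem~\ref{mt}; together with the upper bound above, the lifted map must therefore be an isomorphism in every bidegree, forcing the degeneration.

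The main obstacle is the quantum lifting of the generators in the elliptic case $\kappa = 0$. When $\kappa > 0$, weight considerations greatly simplify matters, since the higher Brylinski differentials shift $\HP_0$-weight by positive multiples of $\kappa$ while the Hilbert series of Theorem~\ref{mt} has controlled growth, so most higher differentials vanish automatically and only finitely many need to be analyzed by hand. In the elliptic case there is no $\hbar$-weight gap, and one must either identify the general deformation $A_\hbar$ up to gauge equivalence with a specific model (such as the elliptic algebras $A_\gamma$ mentioned in the abstract, for which $\HH_0$ can be computed directly), or establish a formality/versality statement asserting that $\HH_0(\Sym^n A_\hbar[\hbar^{-1}])$ depends only on the underlying Poisson structure of $\OO_{X^{(n)}}$ and not on higher-order corrections in the star-product. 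Any such rigidity result, combined with the explicit Hilbert series of Theorem~\ref{mt}, completes the proof.
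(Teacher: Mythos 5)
Your proposal correctly identifies the Brylinski upper bound, but the lower-bound half — lifting generators of $L$ to classes in $\HH_0(\Sym^{j+1} A_\hbar[\hbar^{-1}])$ by ``quantizing the Poisson-side construction'' — is precisely the difficulty, and you do not supply it. The proof of Theorem \ref{mt} proceeds through the geometry of a $\C^*$-equivariant pro-bundle on $\PP^1$ and a slice argument for the group $\mathcal{G}_X$; none of this produces explicit cocycle representatives, so there is no evident recipe for preferred noncommutative lifts. Your own discussion of the elliptic case concedes the obstacle without resolving it: the suggested reduction to $A_\gamma$ begs the question, since the computation of $\HH_0(A_\gamma)$ (Corollary \ref{ellcor}) is itself derived in the paper as a \emph{consequence} of this theorem, and the ``formality/versality'' statement you invoke as an alternative is not established. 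The claim that for $\kappa>0$ ``most higher differentials vanish automatically'' is also unjustified as stated.

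The paper takes a different and shorter route that avoids quantum lifting entirely. It deforms $X$ to the smooth symplectic surface $Z(Q-\lambda)$, $\lambda \neq 0$, where Nest--Tsygan degeneration gives $\HH_0(A_{\lambda,\hbar}[\hbar^{-1}]) \cong \HP_0(A_\lambda)((\hbar))$ outright. Since $A_{\lambda,\hbar}[\hbar^{-1}]$ is simple with trivial center, Theorem \ref{eothm} (from \cite{EO}) identifies $\HHH(A_{\lambda,\hbar}[\hbar^{-1}])$ as a free polynomial coalgebra, and a Milnor-number count shows $\dim_{\C((\hbar))}\HH_0(\Sym^n A_{\lambda,\hbar}[\hbar^{-1}]) = \dim \HP_0(\Sym^n A_0)$ using Theorem \ref{mt}. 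Semicontinuity (dimensions of $\HH_0$ can only increase as $\lambda\to 0$) then meets the Brylinski upper bound and forces equality. The two ingredients your proposal is missing are exactly the deformation-to-smooth step and the structural input of Theorem \ref{eothm}, which together sidestep the need to exhibit generators on the Hochschild side.
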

This generalizes the theorem above.   

In the case that the Poisson bracket has degree zero, there exist not
merely formal but actual, \emph{homogeneous} quantizations of $A$, the
Artin-Tate-Odesski-Sklyanin-type algebras $A_\gamma$ modulo their
center (e.g., \cite{ATV, Steph}; see also \cite[\S 3.5]{EGdelpezzo}). Here
the parameter $\hbar$ is replaced by a point $\gamma$ on an elliptic
curve. For such algebras, we deduce
\begin{corollary} \label{ellcor} For all but countably many parameters $\gamma$,
we have a noncanonical isomorphism of weight-graded vector spaces, 
$\HH_0(\Sym^n A_\gamma) \cong \HP_0(\OO_{X^{(n)}})$, and moreover,
\begin{equation} \label{ellcorfla}
\bigoplus_n \HH_0(\Sym^n A_\gamma)^* \cong \Sym(\HH_0(A_\gamma)^*[t]),
\end{equation}
as bigraded algebras (noncanonically), where
$\HH_0(\Sym^n A_\gamma)^*$ has degree $n$, and $t$ has degree $1$ and
weight $-d$.\footnote{In general, there is a canonical map from the RHS
  to the LHS as degree-graded algebras, but it is \emph{not} an
  isomorphism, nor a map of weight-graded algebras; see Remark \ref{noncanrem}.}
\end{corollary}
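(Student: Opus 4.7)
The plan is to reduce to Theorem \ref{defdeg} by building, for each generic $\gamma$, a formal deformation quantization $A_\hbar$ of $\OO_X$ whose generic fiber recovers $A_\gamma$ up to a base change, and then combining this with the Brylinski spectral sequence applied directly to $A_\gamma$ to compute $\dim\HH_0(\Sym^n A_\gamma)$ exactly.

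First, applied to the graded quantization $A_\gamma$ itself, the Brylinski spectral sequence gives the weight-by-weight inequality $\dim_\C\HH_0(\Sym^n A_\gamma)_w\leq \dim_\C\HP_0(\OO_{X^{(n)}})_w$ for every $\gamma$. For the reverse inequality, pass from $A_\gamma$ to a formal deformation $A_\hbar$ of $\OO_X$ by rescaling the commutator of $A_\gamma$ by $\hbar$, in the spirit of \cite[\S 3.5]{EGdelpezzo}, chosen so that $A_\hbar$ is flat over $\C[[\hbar]]$, $A_\hbar/(\hbar)\cong \OO_X$, and $A_\hbar[\hbar^{-1}]$ is isomorphic to $A_\gamma$ after a finite base extension in $\hbar$ (for $\gamma$ in the complement of a countable bad locus on $E$). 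Theorem \ref{defdeg} then gives
\[
\dim_{\C((\hbar))}\HH_0(\Sym^n A_\hbar[\hbar^{-1}])_w = \dim_\C\HP_0(\OO_{X^{(n)}})_w,
\]
and by flat base change the left side equals $\dim_\C\HH_0(\Sym^n A_\gamma)_w$. The two inequalities collapse to an equality of dimensions in every weight, which yields a noncanonical weight-graded isomorphism $\HH_0(\Sym^n A_\gamma)\cong\HP_0(\OO_{X^{(n)}})$.

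For the bigraded algebra statement \eqref{ellcorfla}, Theorem \ref{mt} provides $\bigoplus_n\HP_0(\OO_{X^{(n)}})^*\cong \Sym(L)$ with $h(L;t^{-1},s) = h(J_Q;t)s/(1-t^ds)$. The factor $1/(1-t^d s)$ is exactly the Hilbert series of $\C[t]$ with $t$ of degree $1$ and weight $-d$, so using the $n=1$ case $\HH_0(A_\gamma)\cong J_Q$ of the vector-space isomorphism above, $L$ may be identified with $\HH_0(A_\gamma)^*[t]$ as a bigraded vector space. Transferring the free-commutative-algebra structure from $\HP_0$ to $\bigoplus_n \HH_0(\Sym^n A_\gamma)^*$ is a further generic-fiber argument applied to the symmetrization maps, which are algebraic in $\gamma$: the Hilbert series of the target already agrees with $\Sym(\HH_0(A_\gamma)^*[t])$ by the vector-space result, and lifting the polynomial generators from the Poisson side (using Theorem \ref{mt}) produces, for generic $\gamma$, a noncanonical isomorphism of bigraded algebras. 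This isomorphism is \emph{not} the canonical map of the footnote, which is injective but not surjective.

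The main obstacle is the construction of $A_\hbar$ together with the verification that it is flat over $\C[[\hbar]]$ with the prescribed generic fiber. Sklyanin-type algebras are well known to behave badly at torsion points of the elliptic parameter curve (acquiring extra central elements or failing Koszulity), and these are precisely the countable set of $\gamma$ that must be excluded; this is the source of the ``all but countably many $\gamma$'' qualification in the statement. Once flatness of the rescaling family is in hand, the conclusion follows from Theorems \ref{mt} and \ref{defdeg} by the semicontinuity/base-change manipulations sketched above.
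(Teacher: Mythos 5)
Your overall strategy — reduce to Theorem \ref{defdeg} via a formal family near $\gamma=0$ and then pass to actual parameters by a genericity argument — is indeed the route the paper takes (the paper states it in two sentences as ``a standard argument''). But two of your intermediate steps are wrong as written, and one of them has the inequality in the wrong direction.

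Your first step claims that the Brylinski spectral sequence, applied to $A_\gamma$ directly, gives $\dim\HH_0(\Sym^n A_\gamma)_w\leq\dim\HP_0(\OO_{X^{(n)}})_w$ \emph{for every} $\gamma$. There is no such spectral sequence: $A_\gamma$ is a graded algebra, not a filtered deformation of $\OO_X$, so its associated graded (with respect to the only available filtration, the one induced by the grading) is $A_\gamma$ itself rather than $\OO_X$. If you instead try to extract an inequality from the family over the elliptic curve $E$ by semicontinuity, you get the \emph{opposite} direction: the weight-$w$ component of $\HH_0(\Sym^n A_\gamma)$ is the cokernel of a family of linear maps between fixed finite-dimensional spaces, so its dimension is \emph{upper} semicontinuous in $\gamma$, with the generic value being the minimum. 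Accordingly $\dim\HH_0(\Sym^n A_\gamma)_w\geq\dim\HP_0(\OO_{X^{(n)}})_w$ holds everywhere, with equality only generically. Your claimed $\leq$ is in fact false at torsion $\gamma$, where the paper notes $\HH_0(A_\gamma)$ is infinite-dimensional. Your third step (``by flat base change the left side equals $\dim_\C\HH_0(\Sym^n A_\gamma)_w$'') likewise conflates the formal generic fiber over $\C((\hbar))$ with specific $\gamma\in E$: $A_\hbar[\hbar^{-1}]$ is \emph{not} a finite base change of any particular $A_\gamma$. What the formal computation correctly gives you is the dimension at the generic point of $E$; you then need constructibility plus upper semicontinuity (not base change to a point) to conclude equality at all but finitely many $\gamma$ for each fixed $(n,w)$, and countably many conditions assemble into a countable bad locus. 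These two fixes are exactly the content of the paper's ``standard argument.''

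For the bigraded-algebra statement your route (``lifting polynomial generators from the Poisson side'') is also not quite right, for the same reason: there is no Brylinski-type filtration tying $\HH_0(\Sym^n A_\gamma)$ to $\HP_0(\OO_{X^{(n)}})$ for a fixed $\gamma$. The cleaner (and implicitly the paper's) argument is to apply Theorem \ref{eothm} and Corollary \ref{dualcor} directly to $A_\gamma$: for generic $\gamma$, $A_\gamma$ is a simple infinite-dimensional algebra with trivial center, so $\bigoplus_n\HH_0(\Sym^n A_\gamma)^*$ is a free (bigraded) polynomial algebra. Its generating space then has Hilbert series forced by the vector-space computation, and this matches $h(L)=h(\HH_0(A_\gamma)^*[t])$ by Theorem \ref{mt} and the $n=1$ identification $\HH_0(A_\gamma)\cong J_Q$. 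A bigraded isomorphism $\Sym L\cong\Sym(\HH_0(A_\gamma)^*[t])$ then exists, necessarily noncanonical (as Remark \ref{noncanrem} notes, the canonical map of Theorem \ref{eothm} does not respect the weight grading). Finally, the simplicity of $A_\gamma$ for generic $\gamma$ is an input you use but should be flagged explicitly; it is not established in this paper.
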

Note that, when $\gamma$ is a point of finite order of the elliptic
curve, then $\HH_0(A_\gamma)$ is infinite-dimensional, and the
isomorphism $\HH_0(A_\gamma) \cong \HP_0(\OO_X)$ fails. We expect that these are exactly the countably
many $\gamma$ mentioned in the corollary.

Moreover, we deduce the same result for the filtered
deformations of these elliptic algebras as in \cite{VBsfc,EGdelpezzo}:
\begin{corollary}
  For all but countably many $\gamma$, if $A'_{\gamma}$ is an
  associative algebra with an ascending filtration such that
  $\gr A'_{\gamma} = A_\gamma$, then
  $\gr \HH_0(\Sym^n A'_\gamma) \cong \HH_0(\Sym^n A_\gamma)$ as graded
  vector spaces.
\end{corollary}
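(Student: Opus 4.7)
The plan is to deduce this corollary from Corollary~\ref{ellcor} by combining the filtration-induced spectral sequence for $A'_\gamma$ with a second application of Theorem~\ref{defdeg}, now viewing $A'_\gamma$ as a deformation quantization of $\OO_X$ along the elliptic parameter.

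The filtration $F$ on $A'_\gamma$ (with $\gr_F A'_\gamma = A_\gamma$) lifts $S_n$-equivariantly to $(A'_\gamma)^{\otimes n}$ and descends to a filtration on $\Sym^n A'_\gamma$ with associated graded $\Sym^n A_\gamma$. The filtration-induced spectral sequence on Hochschild homology then yields, in degree zero, a surjection of weight-graded vector spaces
\[
\HH_0(\Sym^n A_\gamma) \onto \gr_F \HH_0(\Sym^n A'_\gamma).
\]
By Corollary~\ref{ellcor}, for all but countably many $\gamma$ the left-hand side has Hilbert series equal to $h(\HP_0(\OO_{X^{(n)}}); t)$, giving the coefficient-wise upper bound $h(\gr_F \HH_0(\Sym^n A'_\gamma); t) \leq h(\HP_0(\OO_{X^{(n)}}); t)$. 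It suffices to establish the matching lower bound.

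For this, I would realize $A'_\gamma$ itself as a deformation quantization of $\OO_X$. Near a special point $\gamma_0$ on the elliptic curve where $A_{\gamma_0} \cong \OO_X$, the family $\{A_{\gamma'}\}_{\gamma'}$ becomes a formal $\C[[\hbar]]$-deformation of $\OO_X$ (with $\hbar = \gamma' - \gamma_0$). Combining this with the Rees algebra of $F$ produces a two-parameter flat family whose specializations interpolate between $A'_\gamma$ and $\OO_X$, and a suitable associated graded with respect to the combined filtration exhibits $A'_\gamma$ as a deformation quantization of $\OO_X$ in the sense of \S1.2. Applying Theorem~\ref{defdeg} to $\Sym^n$ of this quantization, the Brylinski spectral sequence $\HP_0(\OO_{X^{(n)}}) \Rightarrow \gr \HH_0(\Sym^n A'_\gamma)$ degenerates, giving an isomorphism of weight-graded vector spaces; this matches the upper bound and forces the original surjection to be an isomorphism.

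The main obstacle is the spreading-out step: $A'_\gamma$ is supplied only at a single value of the elliptic parameter, so one must extend it to a family of filtered deformations of $A_{\gamma'}$ on a Zariski-open subset of the elliptic curve containing both $\gamma$ and $\gamma_0$. I expect the Zariski-density / spreading-out argument already used in passing from Theorem~\ref{defdeg} to Corollary~\ref{ellcor} to carry over here with the Rees parameter carried along; the ``all but countably many $\gamma$'' qualifier then reflects excluding the complement of the good locus in this spreading.
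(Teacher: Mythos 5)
Your first step---the surjection $\HH_0(\Sym^n A_\gamma) \onto \gr_F \HH_0(\Sym^n A'_\gamma)$ coming from the filtration spectral sequence, combined with Corollary \ref{ellcor} to identify the Hilbert series of the source---is correct and is indeed the right starting point; the whole content of the corollary is the reverse inequality.

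The second half of your plan does not go through, however, and the gap is more fundamental than the ``spreading-out'' issue you flag at the end. The passage from $A_\gamma$ to $\OO_X$ is \emph{not} a filtration-type degeneration: both are graded algebras with the same weight grading and identical Hilbert series, and what changes along the elliptic parameter is the multiplication, not a filtration. Consequently there is no way to ``combine'' the filtration $F$ (whose $\gr$ is $A_\gamma$) with the elliptic degeneration into a single filtration on $A'_\gamma$ whose associated graded is $\OO_X$. So $A'_\gamma$ simply cannot be exhibited as a deformation quantization of $\OO_X$ in the sense of \S1.2, and Theorem \ref{defdeg} does not apply to it directly. To bring $\OO_X$ back into the picture one would need not a filtration but a genuine flat family $\{A'_{\gamma'}\}$ of filtered algebras over (a formal neighborhood of $\gamma_0$ in) the elliptic curve, and the corollary hands you only a single filtered algebra at a single value $\gamma$. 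There is no reason such an extension exists, and the ``Zariski-density / spreading-out'' used for Corollary \ref{ellcor} is of a different character: there the elliptic family $\{A_{\gamma'}\}$ is given a priori and one argues about its generic fiber, whereas here one would have to \emph{construct} a family containing the given $A'_\gamma$. In short, the lower bound $\dim \HH_0(\Sym^n A'_\gamma) \geq \dim\HP_0(\OO_{X^{(n)}})$ is left unestablished; the argument you sketch cannot supply it without substantial extra input beyond what the statement provides.

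A route that stays within the one-parameter Rees family over $\C[u]$ (with special fiber $\Sym^n A_\gamma$ and generic fiber $\Sym^n A'_\gamma$) reduces the statement to showing that the finitely generated graded $\C[u]$-module $\HH_0(\Sym^n R)$ has no $u$-torsion; this is the actual content that must be proved, and it is not a formal consequence of the surjection you wrote down.
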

\begin{remark}
  The isomorphisms above don't have anything to do with the specific
  quantizations.  Generally, the $n$-th symmetric power of any
  formal or generic filtered (or graded) quantization
  $A'$ of $\OO_X$ has
  $\HH_0(\Sym^n A') \cong \HP_0(\Sym^n \OO_X) \otimes K$, as filtered
  (or graded) vector spaces, where $K$ is the base field for the
  deformation (i.e., $K=\C((\hbar))$ for a formal deformation, and $K=\C$ 
  for a generic enough point of an actual deformation).
  Similarly, for any formal or generic filtered/graded Poisson
  deformation $B'$ of $\OO_X$, we have
  $\HP_0(\Sym^n B') \cong \HP_0(\Sym^n \OO_X) \otimes K$, as
  filtered/graded vector spaces.  Note that the semiuniversal formal
  deformations were classified in \cite{EGdelpezzo}, and the actual elliptic
  deformations were deduced as well (these were studied earlier by a
  different method in \cite{VBsfc}).
\end{remark}

\subsection{General symmetric products and
  Poisson-invariant functionals}
It is convenient to replace Poisson homology with invariant
functionals, as follows.  Let $X$ be an affine Poisson variety. Let
$\g_X \subseteq \Gamma(X, TX)$ be the Lie algebra of Hamiltonian vector fields,
which is the same as $\OO_X/Z(\OO_X)$, viewing $\OO_X$ as a Lie algebra, and 
denoting by $Z(\OO_X)$ the Poisson center of $\OO_X$.
As is standard, invariants of a
Lie algebra action of $\g$ on $A$ are denoted by
$A^\g := \{a \in A: g(a)=0, \forall g \in \g\}$.

We now reformulate the problem of computing Poisson homology of the varieties $X^{(n)}$:
\begin{proposition}
  For every affine Poisson variety $X$, there is a canonical graded
  algebra isomorphism
\begin{equation}
\bigoplus_{n \geq 0} \HP_0(X^{(n)})^* \cong \C[\OO_X]^{\g_X} = \bigoplus_{n \geq 0} (\Sym^n (\OO_X)^*)^{\g_X}.
\end{equation}
\end{proposition}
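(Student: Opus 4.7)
The plan is to identify $\bigoplus_n \OO_{X^{(n)}}^*$ with $\C[\OO_X] = \Sym(\OO_X^*)$ as a bigraded algebra, and then cut out $\HP_0(X^{(n)})^* \subset \OO_{X^{(n)}}^*$ as the $\g_X$-invariants of the $n$-th piece. For the first identification I use $\OO_{X^{(n)}} = (\OO_X^{\otimes n})^{S_n}$; in characteristic zero, invariants coincide with coinvariants of $S_n$ via symmetrization, so taking graded duals (the weight grading has finite-dimensional components) yields $\OO_{X^{(n)}}^* \cong \Sym^n(\OO_X^*)$, and summing over $n$ gives $\bigoplus_n \OO_{X^{(n)}}^* \cong \Sym(\OO_X^*) = \C[\OO_X]$. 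A direct check shows the symmetrization product $\OO_{X^{(i)}}^* \otimes \OO_{X^{(j)}}^* \to \OO_{X^{(i+j)}}^*$ matches the standard symmetric-algebra product, and that the polarization $f \mapsto \hat f^{(n)} := \sum_i 1^{\otimes(i-1)} \otimes f \otimes 1^{\otimes(n-i)} \in \OO_{X^{(n)}}$ realizes the $\g_X$-action on $\OO_{X^{(n)}}$ (via $\{\hat f^{(n)}, -\}$) as the derivation extension of the coadjoint $\g_X$-action on $\OO_X^*$.

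Everything then reduces to showing that, writing $A := \OO_{X^{(n)}}$, a linear functional $\phi \in A^*$ annihilates $\{A, A\}$ if and only if it annihilates $\g_X \cdot A$, i.e., is $\g_X$-invariant. The direction $\supseteq$ is immediate from $\g_X \cdot A \subseteq \{A, A\}$. For the converse I invoke Weyl's theorem on multisymmetric polynomials to write every monomial in $A$ as a product of polarized generators $\hat f^{(n)}$, and induct on the number of such factors. The base case $F = \hat f^{(n)}$ is precisely the $\g_X$-invariance hypothesis. For $F = \hat f^{(n)} F'$ with $F'$ of smaller degree, two Leibniz applications (one in each slot) give
\[
\{F, G\} = \{F', \hat f^{(n)} G\} + G \,\{\hat f^{(n)}, F'\} + F' \,\{\hat f^{(n)}, G\}.
\]
Applying $\phi$, the first term vanishes by the inductive hypothesis on $F'$ (with new second argument $\hat f^{(n)} G \in A$), while the sum of the last two equals $\phi(\{\hat f^{(n)}, F'G\}) = 0$ by one more Leibniz and $\g_X$-invariance.

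The main obstacle is this last induction. The naive approach of showing $\{A, A\} = \g_X \cdot A$ as subspaces via the one-slot Leibniz reduction $\{F, G\} = \sum_j F_j \{\hat f_j^{(n)}, G\}$ fails, because $\g_X \cdot A$ need not be an $A$-submodule of $A$ and one ends up in a loop (using $a\{h,b\} = \{h,ab\} - b\{h,a\}$ just swaps the roles of $a$ and $b$). The three-term identity above is engineered precisely so that the ``bad'' $A$-multiples of $\g_X$-images combine via a second Leibniz into a single $\{\hat f^{(n)}, F'G\}$, killed by invariance, while the remaining term has strictly lower complexity and is handled by induction. Once this step is in hand, the proposition follows by taking graded duals and summing over $n$, with the multiplication compatibility noted above.
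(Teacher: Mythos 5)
Your proof is correct and takes essentially the same approach as the paper: both reduce to the subspace equality $\{\Sym^n\OO_X,\Sym^n\OO_X\}=\g_X\cdot\Sym^n\OO_X$ by induction on the number of polarized generators in the first argument, after observing that polarizations generate $\Sym^n\OO_X$ as a commutative algebra. Your three-term identity, once you apply Leibniz to recombine its last two terms, is exactly the paper's identity \eqref{bpid}, $\{ab,c\}=\{a,bc\}+\{b,ca\}$ — which is the paper's way of proving directly the very subspace equality you describe as failing under the ``naive'' one-slot Leibniz reduction, so your dual (functional) phrasing is an equivalent repackaging of the same argument.
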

Here, $\C[\OO_X]$ denotes the polynomial functions on the
(infinite-dimensional) vector space $\OO_X$, which is equipped with
the coadjoint action of $\g_X$. The proof is easy and short:
\begin{proof}
We have $HP_0(X^{(n)}) = (\Sym^n \OO_X)/\{\Sym^n \OO_X, \Sym^n \OO_X\}$.  We claim that there is an identification
\begin{equation}
\{\Sym^n \OO_X, \Sym^n \OO_X\} = \g_X (\Sym^n \OO_X).
\end{equation}
Note that we have an obvious inclusion of vector spaces, $\OO_X \subseteq \Sym^n \OO_X$, given by $f \mapsto f \& 1 \& \cdots \& 1$.  
Using this, the inclusion $\supseteq$ above follows from the equality
\begin{equation}
g(f_1 \& f_2 \& \cdots \& f_n) = \{g, f_1 \& \cdots \& f_n\}.
\end{equation}
For the inclusion $\subseteq$, we use the fact that $\Sym^n \OO_X$ is
generated, as a commutative algebra, by the subspace
$\OO_X \subseteq \Sym^n \OO_X$: this follows inductively on $n$ by an
easy argument.  Then, we use the fact that, for any Poisson algebra
$A$ which is generated as a commutative algebra\footnote{In fact,
  $\{A, A\} = \{V, A\}$ assuming only that $A$ is generated as a
  Poisson algebra by $V$, using \eqref{bpid} and the Jacobi identity.}
by $V \subseteq A$, we have $\{A, A\} = \{V, A\}$, by the identity
\begin{equation} \label{bpid}
\{ab, c\} = \{a, bc\} + \{b, ca\}. \qedhere
\end{equation}
\end{proof}

\subsection{A $\C^*$-equivariant vector bundle on $\PP^1$}
We return to the surface $X = Z(Q)$ from \S \ref{mrs}.  In
the Kleinian $A_{m-1}$-case, we will give a short and self-contained
proof of Theorem \ref{mt} in \S \ref{ampfsec} (which would deserve
mention even if the proof below extended to this case).

The main step of the proof for all other cases is to write
$\C[\OO_X]^{\g_X}$ as the algebra of global sections of a certain
infinite-dimensional $\C^*$-equivariant vector bundle on $\PP^1$,
whose definition and structure we explicitly describe in this section.

Henceforth, we assume that we are not in the Kleinian type $A_{m-1}$
case. This has the following important consequence:
\begin{lemma}
  Suppose that $X$ is not of Kleinian type $A_{m-1}$. Then, all nonzero
  homogeneous Hamiltonian vector fields $\xi_f$ have positive degree.
  In particular, $\langle x, y \rangle \cap \{\OO_X, \OO_X\} = 0$.
\end{lemma}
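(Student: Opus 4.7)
The plan is to treat the two claims in succession.

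\textbf{Positive degree.} Since $X$ has an isolated singularity at the origin, its smooth locus is symplectic, whence $Z(\OO_X) = \C$. So $\xi_f = 0$ iff $f$ is a scalar, and any homogeneous $f$ with $\xi_f \neq 0$ has $|f| \geq a$, the smallest positive weight in $\OO_X$. Since $|\xi_f| = |f| + \kappa$, positivity reduces to $|f| > -\kappa$, which is automatic for $\kappa \geq 0$ from $|f| \geq 1$; for $\kappa = -1$ (Kleinian $D, E$), inspection of \eqref{amdesc}--\eqref{e8desc} gives $a \geq 2$ outside of type $A_{m-1}$, so $|f| \geq a \geq 2 > -\kappa$.

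\textbf{Consequence.} Given the positive-degree property, the identities $\{f, g\} = \xi_f(g) = -\xi_g(f)$ combined with $|\xi_f|, |\xi_g| \geq 1$ for nonzero brackets of homogeneous elements yield $|\{f, g\}| > \max(|f|, |g|)$. If $x = \sum_i \{f_i, g_i\}$ homogeneously of weight $a$, then each factor has degree $< a$, hence is a scalar, killing the bracket; so $x \notin \{\OO_X, \OO_X\}$. For $y = \sum_i \{f_i, g_i\}$ of weight $b$, the factors satisfy $|f_i|, |g_i| < b$. Lifting to $\C[x,y,z]$ (which is unique since $|f_i|, |g_i| < b < d$), any monomial $x^{i_1}y^{j_1}z^{k_1}$ appearing in $f_i$ has $ai_1 + bj_1 + ck_1 < b \leq c$, forcing $j_1 = k_1 = 0$, and so $f_i \in \C[x]$. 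The same applies to $g_i$, and then $\{f_i, g_i\} = 0$ by the Leibniz rule and $\{x, x\} = 0$. Thus $y \notin \{\OO_X, \OO_X\}$, and combining with the previous case gives $\langle x, y \rangle \cap \{\OO_X, \OO_X\} = 0$.

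The crux of the argument is the passage from positive degree of Hamiltonian vector fields to the strict inequality $|\{f, g\}| > \max(|f|, |g|)$. The main nontrivial step is then the reduction for $y$: one must exploit the weight ordering $a \leq b \leq c$ to localize the putative bracket factors into the subring $\C[x]$, where all Poisson brackets vanish. Everything else is a quick degree check.
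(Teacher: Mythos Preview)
Your proof is correct and follows essentially the same approach as the paper. For the first claim both you and the paper reduce to checking $a + \kappa > 0$ case by case; for the second claim the paper compresses your argument into the single observation that any nonzero homogeneous bracket satisfies $|\{f,g\}| \geq |\xi_x(y)| = a+b+\kappa > b$, which is exactly what your case analysis (forcing $f_i,g_i \in \C[x]$ when their degrees are $<b$) establishes in more detail.
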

\begin{proof}
  In the non-Kleinian case, it is clear that $\xi_f$ has positive
  degree for all noncentral $f$, since the Poisson bracket has
  nonnegative degree and $\C$ is central. In the Kleinian case,
  looking at \eqref{amdesc}--\eqref{e8desc}, only in the type
  $A_{m-1}$ case is there a Hamiltonian vector field of nonpositive
  degree (in particular, $\xi_x$ has zero degree, and in type $A_1$,
  also $\xi_y$ and $\xi_z$). Then, since $a \leq b \leq c$, for all
  homogeneous $f,g$ with $\{f,g\} \neq 0$, we have
  $|\{f,g\}| \geq |\xi_x(y)| > b$, which implies the final statement.
\end{proof}
Next, for any $m \geq 0$, let $(\OO_X)_m$ denote the subspace of $\OO_X$ of
weighted degree $m$.  It is convenient to consider, rather than
$\C[\OO_X]$, the subalgebra
\begin{equation}\label{fgdefn}
\Fg(X) := \Sym (\bigoplus_{m \geq 0}((\OO_X)_m)^*)
\end{equation}
The entire
algebra $\C[\OO_X]$ is the completion of $\Fg(X)$ by the weight
grading.  In other words, $\Fg(X)$ is the algebra of continuous
polynomial functions on the completion $\OO_{\widehat X} = \C[[x,y,z]]/(Q)$ with respect to the
weight grading. We may view $\OO_{\widehat X}$ as a pro-scheme (with limit
taken over finite-dimensional affine spaces), and in this sense,
$\Fg(X) = \C[\OO_{\widehat X}]$.  Note that $\C[\OO_X]^{\g_X}$ is also a
completion of $\Fg(X)^{\g_X}$. In our case, in fact,
$(\Sym^n(\OO_X)^*)^{\g_X}$ will turn out to be finite-dimensional for
each $n$, and hence $\C[\OO_X]^{\g_X} = \C[\OO_{\widehat X}]^{\g_X}$.

Let
$V := \langle x, y \rangle \subset \OO_{\widehat X}$.  Fix a graded complement  $\OO_{\widehat X}^0$ to $V$ containing all Poisson brackets. Thus,
$\OO_{\widehat X} = V \times \OO_{\widehat X}^0$.
Note that functionals in $\Fg(X)$ are the same as
regular functions on $(V \setminus \{0\}) \times \OO_{\widehat X}^0$.
\begin{lemma} \label{vbdlelemma} The invariants $\Fg(X)^{\g_X}$ can be
  noncanonically identified with regular functions on the total space
  of the pro-vector bundle $Y'$ on $V \setminus \{0\}$ with fiber over
  $(\alpha,\beta)$ given by
\begin{equation} \label{vbdfn} Y'_{(\alpha,\beta)} = \OO_{\widehat X}^0 /
  \{\alpha x+ \beta y, \OO_{\widehat X}\}.
\end{equation}
\end{lemma}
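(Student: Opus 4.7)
The plan is to establish the identification by exhibiting a gauge-splitting-based restriction map and proving it to be an isomorphism via a filtered induction on the polynomial degree in the fiber coordinate. By the preceding lemma, every Poisson bracket lies in $\OO_{\widehat X}^0$, so for any $g \in \OO_{\widehat X}$ and $u = v + w \in V \oplus \OO_{\widehat X}^0$ we have $\{g, u\} \in \OO_{\widehat X}^0$; hence the $\g_X$-action preserves the projection $\pi \colon \OO_{\widehat X} \to V$, and on each fiber $S_v = v + \OO_{\widehat X}^0$ the vector field attached to $g$ is the affine map $w \mapsto \{g, v\} + \{g, w\}$, with constant term in $\{v, \OO_{\widehat X}\}$ and linear part the restriction of $\ad g$ to $\OO_{\widehat X}^0$. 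I would then choose, for each $v \in V \setminus \{0\}$, a graded linear complement $C_v \subset \OO_{\widehat X}^0$ to $\{v, \OO_{\widehat X}\}$ depending regularly on $v$; such a splitting exists graded piece by graded piece since in each weight $\{v, \OO_{\widehat X}\}_m$ is a subbundle over a dense open of $V \setminus \{0\}$, and the composition $C_v \hookrightarrow \OO_{\widehat X}^0 \twoheadrightarrow \OO_{\widehat X}^0/\{v, \OO_{\widehat X}\}$ exhibits the resulting pro-bundle as isomorphic to $Y'$. It then suffices to show that the restriction map
\begin{equation}
\operatorname{res} \colon \Fg(X)^{\g_X}|_{V \setminus \{0\}} \longrightarrow \OO\bigl(\textstyle\bigcup_{v} \{v\} \times C_v\bigr)
\end{equation}
is an isomorphism of graded algebras.

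To verify this, decompose any invariant $F$ by polynomial degree in $w$ as $F = \sum_{n \geq 0} F_n$, with $F_n \in \OO(V \setminus \{0\}) \otimes \Sym^n((\OO_{\widehat X}^0)^*)$, and polarize to obtain symmetric $n$-linear forms $\bar F_n(v; w_1, \ldots, w_n)$. The invariance $\xi_g F = 0$ decomposes by $w$-degree into the system
\begin{equation}
k \bar F_k(v; w^{k-1}, \{g, w\}) + (k+1) \bar F_{k+1}(v; w^k, \{g, v\}) = 0, \qquad g \in \OO_{\widehat X}, \quad k \geq 0.
\end{equation}
The case $k = 0$ forces $\bar F_1(v) \in \{v, \OO_{\widehat X}\}^\perp$, i.e., a section of the dual fiber of $Y'$. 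For $k \geq 1$, since $\{g, v\}$ sweeps out all of $\{v, \OO_{\widehat X}\}$ as $g$ varies, the equation pins down $\bar F_{k+1}$ on every tuple with at least one entry in $\{v, \OO_{\widehat X}\}$, leaving free precisely its component in $\Sym^{k+1}(C_v^*)$. An induction on $k$, which converges weight by weight thanks to the grading, then shows that $F$ is uniquely reconstructed from the sequence of these free parts, yielding both injectivity and surjectivity of $\operatorname{res}$.

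The hardest part will be the well-posedness of this inductive step: the formula determining $\bar F_{k+1}$ on entries in $\{v, \OO_{\widehat X}\}$ must be consistent under the ambiguity $g \mapsto g + g'$ with $g' \in Z_v := \ker(\ad v)$, which amounts to proving $\bar F_k(v; w^{k-1}, \{g', w\}) = 0$ for all $g' \in Z_v$. For generic $v \in V \setminus \{0\}$ one expects $Z_v = \C[v]$, whence for $g' = f(v)$ one has $\{g', w\} = f'(v)\{v, w\} \in \{v, \OO_{\widehat X}\}$; combined with the antisymmetry identity $\{v, w\} = -\{w, v\}$ and $\{w, w\} = 0$, the required vanishing reduces to the order-$(k-1)$ equation already imposed on $\bar F_k$. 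The non-generic locus is then handled by continuity in each weight-graded piece. Once this consistency is secured, the filtered induction closes and produces the desired isomorphism of $\Fg(X)^{\g_X}$ with regular functions on $Y'$.
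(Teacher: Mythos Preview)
Your approach is genuinely different from the paper's. The paper integrates $\g_X$ to a pro-unipotent group $\mathcal{G}_X$, identifies $\Fg(X)^{\g_X}$ with functions on the orbit space $\OO_{\widehat X}/\mathcal{G}_X$, shows that $V\setminus\{0\}$ consists of smooth points with normal bundle $Y'$, and then takes the associated graded by the ideal $I_V$. Crucially, it then invokes Theorem~\ref{mtt} to know that $\gr_{I_V}$ is a \emph{polynomial} algebra, after which a general lifting lemma (Claim~\ref{grclaim}) gives the isomorphism. So the paper's proof of the lemma is logically intertwined with Theorem~\ref{mtt}. Your Lie-algebraic recursion, if it works, is self-contained and independent of Theorem~\ref{mtt}; that would be a real gain. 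Also, by Lemma~\ref{kerbr} the centralizer $Z_v=\C[[v]]$ and the rank of $\{v,\OO_{\widehat X}\}_m$ are constant for \emph{all} $v\in V\setminus\{0\}$, so the complement $C_v$ exists globally, not merely on a dense open, and no continuity argument is needed.

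There is, however, a genuine gap in your surjectivity step. You identify one consistency condition---the ambiguity $g\mapsto g+g'$ with $g'\in Z_v$---and your resolution (set $g=-w$ in the level-$(k-1)$ equation and use $\{w,w\}=0$) is correct. But there is a second, independent condition you do not address: when \emph{two} entries of $\bar F_{k+1}$ lie in $\{v,\OO_{\widehat X}\}$, say $u=\{g,v\}$ and $u'=\{g',v\}$, the recurrence prescribes $\bar F_{k+1}(w^{k-1},u',u)$ in two different ways (with either $u$ or $u'$ as the distinguished slot), and these must agree for $\bar F_{k+1}$ to be a symmetric form. Equivalently, the first-order system $D_uF_{k+1}=\text{(known)}$ for $u\in\{v,\OO_{\widehat X}\}$ needs the integrability check $D_uD_{u'}F_{k+1}=D_{u'}D_uF_{k+1}$. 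This does \emph{not} reduce to the $Z_v$-ambiguity you treat. It can in fact be verified: polarizing the level-$(k-1)$ relation on $\bar F_k$ and using Jacobi in the form $\{g,u'\}-\{g',u\}=\{\{g,g'\},v\}$, the obstruction collapses to the level-$(k-1)$ relation applied with $h=\{g,g'\}$. But this computation is the heart of the construction of the inverse to $\operatorname{res}$ and must be carried out explicitly; as written, your argument establishes injectivity of $\operatorname{res}$ but not surjectivity.
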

The lemma will be proved in Section \ref{mtpfsec}.  Let us
explain why \eqref{vbdfn} indeed defines a pro-vector bundle.  Note
that $Y'$ is a pro-coherent sheaf which is pulled back from
$\PP^1$. Next, viewing $\OO_{\widehat X}$ as a constant pro-vector
bundle, we may view $Y'$ as the cokernel of the pro-coherent sheaf
map,
 \begin{equation}\label{precydefn}
   \OO_{\widehat X} \otimes \OO(-1) \rightarrow \OO_{\widehat X}^0, \quad f \otimes (\alpha x + \beta y) \mapsto \{f, \alpha x + \beta y\}.
\end{equation}
This map descends to 
\begin{equation} \label{desprecydefn}
\OO_{\widehat X}/\C[[\alpha x + \beta y]] \otimes \OO(-1) \rightarrow \OO_{\widehat X}^0,
\end{equation}
where $\OO_{\widehat X}/\C[[\alpha x + \beta y]]$ is the
quotient of $\OO_{\widehat X}$ by the sub-pro-vector bundle
$\C[[\alpha x + \beta y]] \cong \prod_{i \geq 0} \OO(-1)^{\otimes
  i}$.  

We claim that \eqref{desprecydefn} is injective on fibers.  This
follows by computing that $\C[[\alpha x + \beta y]]$ is the kernel of
$\{\alpha x + \beta y, -\}$, see Lemma \ref{kerbr}.  Hence, this is a
pro-vector bundle map, and the cokernel, $Y'$, is indeed a pro-vector
bundle.

Next, note that $Y'$ is equipped with a $\C^*$-equivariant structure with respect to the $\C^*$-action on $V$, given by, for $w \in \C^*$, 
\begin{equation}
x \mapsto w^{a} x, \quad y \mapsto w^{b} y.
\end{equation}
The action on coordinate functions then has the form
\begin{equation}
\alpha \mapsto w^{-a} \alpha, \quad \beta \mapsto w^{-b} \beta.
\end{equation}

Furthermore, $Y'$ is pulled back from a pro-vector bundle $Y$ on
$\PP^1$. We may thus regard $\Fg(X)^{\g_X}$ as the regular functions
on the total space of the pro-vector bundle
\begin{equation} \label{edfn}
E := Y \oplus \OO(-1)
\end{equation}
on $\PP^1$.  These pro-bundles are also $\C^*$-equivariant.

Note that representations $W$ of $\C^*$ may also be viewed as graded
vector spaces, with action of $w$ in degree $m$ by multiplication by
$w^m$. Thus, we will use the notation $h(W;t)$ for the character of
$W$ viewed as a representation of $\C^*$, i.e., the Hilbert series of
$W$ where $W$ is viewed as a graded vector space (rather than a vector
space with $\C^*$-action).

Next, we describe the structure of $Y$, which will imply the main
theorem. First, recall the following basic facts about
$\C^*$-equivariant vector bundles on $\PP^1$:
\begin{definition}
  Let $\OO(n)_m$ denote $\OO(n)$ with the $\C^*$-equivariant
  structure given by the action of $w \in \C^*$ on the fiber over
  $(1,0)$ as multiplication by $w^m$.
\end{definition} 
In particular, the tautological line bundle $\OO(-1)$ (which appeared
in \eqref{edfn}) is the equivariant bundle $\OO(-1)_{a}$.  We will
need the following well-known result, whose proof is easy and omitted:
\begin{theorem}\label{equivarthm} Let $\PP^1$ be equipped with the above $\C^*$-action.
\begin{enumerate}
\item[(i)] Up to isomorphism, any $\C^*$-equivariant vector bundle on $\PP^1$ has a unique decomposition as a sum of line bundles of the form $\OO(n)_m$.
\item[(ii)] For $n \geq 0$,
\begin{equation}
h(\Gamma(\PP^1, \OO(n)_m);t) = t^m (1 + t^{a-b} + \cdots + t^{n(a-b)}).
\end{equation}
\end{enumerate}
\end{theorem}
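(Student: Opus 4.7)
The plan is to obtain part (i) as a $\C^*$-equivariant enhancement of Grothendieck's splitting theorem for vector bundles on $\PP^1$, and then to read off part (ii) by explicit computation of weights in a local trivialization near a fixed point.

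For part (i), I would work with the two $\C^*$-fixed points $0 = [0:1]$ and $\infty = [1:0]$ of $\PP^1$. Each affine chart $\PP^1 \setminus \{0\}$ and $\PP^1 \setminus \{\infty\}$ is a linear $\C^*$-representation $\mathbb{A}^1$, on which the category of $\C^*$-equivariant vector bundles is equivalent (via the fiber over the fixed point) to the category of finite-dimensional $\C^*$-representations: any equivariant $\C[u]$-module that is free of finite rank is equivariantly isomorphic to $\C[u] \otimes_\C V$ for a unique $\C^*$-representation $V$, since $\C^*$ is reductive and $\C[u]$ is a principal ideal domain. Decomposing $V$ into weight spaces on each chart, the transition function on the overlap becomes a matrix in $\GL_r(\C[u, u^{-1}])$ whose entries are constrained to be monomials by equivariance; equivariant Smith normal form over the principal ideal domain $\C[u, u^{-1}]$ then brings this matrix to diagonal form, exhibiting $E$ as a direct sum of equivariant line bundles $\OO(n)_m$. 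Uniqueness is immediate, since the integer $n$ is the degree of the underlying non-equivariant line bundle (by Grothendieck), and $m$ is the $\C^*$-weight on the fiber over $(1,0)$.

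For part (ii), I would work in the affine chart around $(1,0)$ with local coordinate $u := \beta/\alpha$. The global sections of $\OO(n)_m$ for $n \geq 0$ restrict in this chart to polynomials of degree at most $n$ in $u$, with basis $1, u, u^2, \ldots, u^n$ that trivializes the bundle near the fixed point. The section restricting to $1$ has weight $m$ by the definition of $\OO(n)_m$ (which assigns weight $m$ to the fiber over $(1,0)$). Since $u = \beta/\alpha$ has $\C^*$-weight $(-b) - (-a) = a - b$, using the given action $\alpha \mapsto w^{-a}\alpha$, $\beta \mapsto w^{-b}\beta$, the section restricting to $u^i$ has total weight $m + i(a-b)$, summing to the claimed Hilbert series $t^m(1 + t^{a-b} + \cdots + t^{n(a-b)})$.

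The only nontrivial ingredient is Grothendieck's theorem (together with equivariant triviality of bundles on $\mathbb{A}^1$); everything else is careful weight bookkeeping, and I do not foresee a serious obstacle.
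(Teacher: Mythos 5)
The paper states this as a well-known result and explicitly omits the proof, so there is no argument of the authors' to compare against. Your argument is the standard one; part (ii) and the accompanying weight bookkeeping are correct. In part (i), two points deserve tightening. First, your claim that equivariance forces the transition-matrix entries to be monomials in $u=\beta/\alpha$ requires $a\neq b$; the paper allows $a=b$ (e.g.\ $\widetilde E_6$, $\widetilde E_7$, $D_4$), and in that case $u$ has weight zero, the $\C^*$-action on $\PP^1$ is trivial, and equivariance only forces the transition matrix to be block-diagonal with respect to the fiber-weight decomposition, whereupon you split each block by non-equivariant Grothendieck. Second, appealing to ``Smith normal form over the PID $\C[u,u^{-1}]$'' is not quite what is needed: Smith normal form permits arbitrary invertible row and column operations over $\C[u,u^{-1}]$, whereas to preserve the trivializations on the two charts you may only use column operations over $\C[u]$ and row operations over $\C[u^{-1}]$ (or vice versa), and this restriction is precisely what distinguishes Grothendieck's theorem (Birkhoff factorization) from Smith normal form over a PID. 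What you really want is an equivariant Birkhoff factorization; in the monomial case this can be proved directly, or one can run the maximal-degree-line-subbundle proof of Grothendieck using a $\C^*$-weight vector as the section. Finally, in the uniqueness step, the fiber over $(1,0)$ gives the multiset of weights $\{m_i\}$ but does not by itself pair them with the degrees $\{n_i\}$; you should also use the fiber over $(0,1)$, whose weight for $\OO(n)_m$ is $m+n(a-b)$, or equivalently appeal to the full equivariant character as a complete invariant.
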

\begin{remark}
  In fact, we will work also with pro-$\C^*$-equivariant vector
  bundles, but only those for which the weight-$m$ subspaces of the
  fibers at $(0,1)$ and $(1,0)$ are finite-dimensional for all
  $m \in \ZZ$.  In this case, the above theorem still applies, except
  that now the pro-bundles will be a direct product of possibly
  infinitely many $\OO(n)_m$ (but only finitely many for each value of
  $m$). In particular, the Hilbert series of global sections makes
  sense.
\end{remark}
We may therefore make the following definition:
\begin{definition}
For any $\C^*$-equivariant vector bundle $U$ on $\PP^1$ of the form
$U \cong \bigoplus_{i} \OO(p_i)_{q_i}$, write
\begin{equation}
\chi_{\C^*}(U) = \sum_i s^{p_i} t^{q_i}.
\end{equation}
\end{definition}
We extend this notation in the obvious way to pro-$\C^*$-equivariant vector bundles whose fibers over $(0,1)$ and $(1,0)$ have finite-dimensional weight-$m$ subspaces for all $m \in \ZZ$.
Now, we may state the main technical result of the paper, which implies Theorem \ref{mt}. It will be convenient to use the pro-bundle
\begin{equation}
\widetilde Y := Y \oplus \OO(0)_{a} \oplus \OO(0)_{b}.
\end{equation}
\begin{theorem}\label{mtt}
We have
\begin{equation}\label{chiyfla}
\chi_{\C^*}(\widetilde Y)=\frac{(1-t^{d-a})(1-t^{d-b})(1-t^{d-c})}{(1-t^a)(1-t^b)(1-t^c)(1-t^{d-a}s)}
\end{equation}
\end{theorem}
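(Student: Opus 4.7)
The strategy is to extract the equivariant character of $\widetilde Y$ from the four-term complex of $\C^*$-equivariant pro-coherent sheaves on $\PP^1$ already set up:
\[
0 \to K \to \OO_{\widehat X}\otimes\OO(-1) \xrightarrow{\phi} \OO_{\widehat X} \to \widetilde Y \to 0,
\]
where $\phi(f\otimes(\alpha x+\beta y))=\{f,\alpha x+\beta y\}$. By Lemma \ref{kerbr}, $\ker\phi = \C[[\alpha x+\beta y]]\otimes\OO(-1)$. The powers $(\alpha x+\beta y)^i$ generate sub-line-bundles $\OO(-i)_{ia}\subset\Sym^i\la x,y\ra\otimes\OO_{\PP^1}$, so $K\cong\prod_{i\ge 1}\OO(-i)_{ia}$. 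The Poisson bracket has weighted degree $\kappa=d-a-b-c$, contributing an overall shift when rendering $\phi$ as $\C^*$-equivariant.

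The principal difficulty is that $\chi_{\C^*}$ is not literally additive on short exact sequences, since $\C^*$-equivariant K-theory of $\PP^1$ carries the nontrivial Euler relation $(1-s^{-1}t^a)(1-s^{-1}t^b)=0$; so a naive Euler characteristic of the complex determines $\widetilde Y$ only up to this relation. I would pin down the actual bundle by combining two pieces of data. First, restricting the complex to the $\C^*$-fixed points $[1{:}0]$ and $[0{:}1]$ yields complexes of graded vector spaces, where Euler characteristics \emph{are} additive. Using $\ker\{x,\cdot\}=\C[[x]]$, a direct telescope produces
\[
\chi(\widetilde Y|_{[1{:}0]}) = \frac{(1-t^{d-b})(1-t^{d-c})}{(1-t^a)(1-t^b)(1-t^c)},
\]
which equals $\chi_{\C^*}(\widetilde Y)|_{s=1}$ predicted by \eqref{chiyfla}; analogously at $[0{:}1]$, matching $|_{s=t^{a-b}}$.

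Second, and more importantly, I would exhibit an explicit splitting $\widetilde Y\cong J_Q\otimes\bigoplus_{n\ge 0}\OO(n)_{n(d-a)}$, which is what \eqref{chiyfla} predicts when rewritten as $\chi_{\C^*}(\widetilde Y)=h(J_Q;t)/(1-st^{d-a})$ (using the Koszul identity $h(J_Q;t)=(1-t^{d-a})(1-t^{d-b})(1-t^{d-c})/((1-t^a)(1-t^b)(1-t^c))$). Concretely, after lifting a homogeneous basis of $J_Q$ to $\OO_{\widehat X}$, I would construct a natural $\C^*$-equivariant map from $J_Q\otimes\bigoplus_n\OO(n)_{n(d-a)}$ to $\widetilde Y$ and verify it is an isomorphism using Nakayama at the fiber $[1{:}0]$, where the fiber-character computation above forces bijectivity.

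The main obstacle will be constructing this natural map, in particular producing the ``shift operator'' of weight $d-a$ on $\widetilde Y$ whose cokernel is $J_Q$. I expect it to arise from the quasihomogeneous Euler identity $axQ_x+byQ_y+czQ_z=dQ$, which connects the Poisson bracket to the Jacobi ideal $(Q_x,Q_y,Q_z)$ defining $J_Q$ and ultimately encodes the relationship between Hamiltonian dynamics and the Milnor ring for isolated quasihomogeneous singularities.
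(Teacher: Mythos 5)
Your setup and your consistency checks are fine: the four-term complex, the identification of the kernel as $\prod_{i\ge 1}\OO(-i)_{ia}$ via Lemma \ref{kerbr}, and the fiber characters at the two fixed points (which indeed match the specializations $s=1$ and $s=t^{a-b}$ of \eqref{chiyfla}) are all correct, and you are right that neither the equivariant K-class nor the two fixed-point restrictions determine the bundle. But that means everything rests on your second step, and that step is a genuine gap rather than a routine verification: the splitting $\widetilde Y\cong J_Q\otimes\bigoplus_{n\ge 0}\OO(n)_{n(d-a)}$ \emph{is} the content of the theorem, and you have not constructed the map realizing it --- you explicitly defer ``producing the shift operator of weight $d-a$'' to a hoped-for consequence of the Euler identity. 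This is exactly where all the work in the paper's proof lies. The paper proves the decomposition indirectly, by computing the Hilbert series of $V_q=\Gamma(\PP^1,\widetilde Y\otimes\OO(-q)_0)$ for \emph{every} $q\ge 0$, which (unlike fixed-point data) does determine the decomposition by Theorem \ref{equivarthm}. That computation requires: translating order-$q$ vanishing at $(0,1)$ into the system \eqref{fieqns} for $D_x=\partial_x-(Q_x/Q_z)\partial_z$ and $D_y=\partial_y-(Q_y/Q_z)\partial_z$; a Poincar\'e-lemma induction showing every solution is $F_i=D_y^{i-1}D_x^{q-i}H$ for a unique potential $H$; the characterization of admissible $H$ by $H_z\in(Q^{q-1})+(Q)_z$, proved by a further induction; and Lemma \ref{alglemm} ($Q_xf,Q_yf\in(Q,Q_z)\Rightarrow f\in(Q,Q_z)$, using that the singularity is isolated and $Q,Q_z$ is a regular sequence). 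None of this is replaced by anything in your sketch. Note also that the space which naturally appears at the $q$-th twist is $(Q^{q-1})/(Q^q)_z$, i.e.\ $\C[x,y,z]/\langle Qf_z+qQ_zf\rangle$ up to a shift; this depends on $q$ and merely has the same Hilbert series as $J_Q$, so the ``natural'' identification with a single fixed copy of $J_Q$ tensored with line bundles is not obviously available.

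A secondary problem is the proposed verification itself: ``Nakayama at the fiber $[1{:}0]$, where the fiber-character computation forces bijectivity'' is not valid. Equality of characters never forces a particular equivariant map to be bijective on a fiber (the zero map is equivariant); you would need to prove surjectivity of your specific map on the fiber, and moreover at \emph{both} fixed points, since the non-isomorphism locus of an equivariant bundle map is closed and invariant and could still be $\{[0{:}1]\}$ after you have checked $[1{:}0]$.
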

The next section is devoted to the proof of this theorem.

\section{Proof of Theorem \ref{mtt}}
The following lemma will be a cornerstone of the proof:
\begin{lemma} \label{kerbr} The kernel of
$\{\alpha x + \beta y, -\}: \OO_{\widehat X} \rightarrow \OO_{\widehat X}$
is $\C[[\alpha x + \beta y]] \subset \OO_{\widehat X}$. 
\end{lemma}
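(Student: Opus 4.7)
The plan is to reduce the statement to a computation on the generic fiber of the projection $\widehat X \to \text{Spec}\,\C[[u]]$ (where $u := \alpha x + \beta y$) and then invoke a standard transcendence argument in characteristic zero.

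By symmetry I may assume $\alpha = 1$. Change to new coordinates $(u, y, z)$ with $u := x + \beta y$, and set $\tilde Q(u, y, z) := Q(u - \beta y, y, z)$. The Jacobi formula for the bracket then gives $\{u, f\} = \tilde Q_z f_y - \tilde Q_y f_z =: \eta(f)$ as a derivation of $R := \OO_{\widehat X} = \C[[u, y, z]]/(\tilde Q)$, so the claim reduces to $\ker(\eta|_R) = \C[[u]]$. Since $Q$ is irreducible and not divisible by $x + \beta y$, the element $\tilde Q(0, y, z) = Q(-\beta y, y, z)$ of $\C[[y, z]]$ is nonzero; equivalently, $u$ is a non-zero-divisor in $R$, so $R$ is $\C[[u]]$-flat and embeds in $R_K := R[u^{-1}]$, where $K := \C((u))$. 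A direct inspection of denominators shows $R \cap K = \C[[u]]$ inside $R_K$, reducing the claim further to $\ker(\eta|_{R_K}) = K$.

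The scheme $\text{Spec}\,R_K$ is the generic fiber of the flat family $\text{Spec}\,R \to \text{Spec}\,\C[[u]]$. It is a smooth integral $1$-dimensional $K$-scheme: the critical locus $V(\tilde Q, \tilde Q_y, \tilde Q_z)$ of $u|_{\widehat X}$ is $0$-dimensional by the isolated-singularity hypothesis together with the non-$A_{m-1}$ exclusion (which force $V(\tilde Q_y, \tilde Q_z) \cap \widehat X \subseteq \{0\}$), hence contained in the special fiber. Moreover $\eta$ is nonzero on $R_K$ because $\eta(z) = -\tilde Q_y \not\equiv 0 \pmod{\tilde Q}$, by a $z$-degree comparison using the leading $z^{d/c}$ term of $\tilde Q$. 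I then invoke the standard characteristic-zero fact: a nonzero $K$-derivation on the function field $L := \text{Frac}(R_K)$ of a smooth geometrically integral curve over $K$ has kernel exactly $K$. Indeed, if $f \in L \setminus K$ satisfies $\eta(f) = 0$, then either $f$ is transcendental over $K$, in which case $L/K(f)$ is algebraic and the uniqueness of the extension of a derivation to a separable algebraic extension forces $\eta = 0$ on $L$, contradicting $\eta \neq 0$; or $f$ is algebraic over $K$, which geometric integrality rules out (the field of constants being $K$).

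The main obstacle is verifying geometric integrality of the generic fiber, i.e., the absolute irreducibility of $\tilde Q$ over $\overline{K}[[y, z]]$. One can check this case-by-case from the explicit polynomials in \eqref{amdesc}--\eqref{e8desc} and their elliptic analogues, or deduce it from normality of $\widehat X$ at the origin (which follows from Serre's $R_1 + S_2$: $\widehat X$ is a Cohen--Macaulay surface singular only at the origin). Alternatively, one can bypass geometric integrality entirely by an intrinsic local-coordinate argument: on the cover $\widehat X \setminus \{0\} = \{\tilde Q_y \neq 0\} \cup \{\tilde Q_z \neq 0\}$, adopt the Darboux-type coordinate $v := \tilde Q$ (replacing $z$ or $y$, respectively), in which a chain-rule computation reduces $\eta$ to a scalar multiple of $\partial/\partial y$ (or $\partial/\partial z$); the kernel on each patch is then manifestly $\C[[u]]$, and the two patches glue after using normality of $\widehat X$ to extend across the codimension-two point $\{0\}$.
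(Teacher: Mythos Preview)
Your strategy --- reduce to the function field $L$ of the generic fibre over $K=\C((u))$ and use that a nonzero $K$-derivation on a transcendence-degree-$1$ extension has kernel equal to the algebraic closure of $K$ in $L$ --- is sound in outline, and in spirit parallels the paper's ``generically symplectic $\Rightarrow$ algebraically dependent'' step. But the justification you give for the key input has a real gap. The assertion that $V(\tilde Q_y,\tilde Q_z)\cap\widehat X\subseteq\{0\}$ is false in general, and the non-$A_{m-1}$ hypothesis does not rescue it. Take $Q=x^3+y^3+z^3$ with $a=b=c=1$ (a legitimate weighted-homogeneous isolated singularity, elliptic and certainly not of type $A$) and $\beta=1$, so $u=x+y$. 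Then $\tilde Q_y=-3(u-y)^2+3y^2=3u(2y-u)$ and $\tilde Q_z=3z^2$, and the entire line $\{u=z=0\}$ lies in $V(\tilde Q,\tilde Q_y,\tilde Q_z)$: geometrically, the fibre $u^{-1}(0)\cap X$ is the triple line $z^3=0$, so every point of it is critical for $u|_X$. This kills both your main smoothness argument and your alternative local-coordinate argument, since both rest on the same covering of $\widehat X\setminus\{0\}$.

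What your transcendence argument actually needs is only that $K$ is algebraically closed in $L=\operatorname{Frac}(R)$; smoothness is irrelevant. This does follow from the normality of $R$ you invoke, though not in one line: if $f\in L\setminus K$ were algebraic over $K$, the integral closure $A$ of $\C[[u]]$ in $K(f)$ would be a complete DVR with residue field $\C$, contained in $R$ by normality; since $[K(f):K]>1$ and the residue extension is trivial, the ramification index is $e\ge 2$, whence $u\in\mathfrak m_R^2$ --- contradicting that $u=x+\beta y$ is nonzero in $\mathfrak m_R/\mathfrak m_R^2$ (as $Q$ has no linear term). That is the step you are missing. Note also that your case-by-case suggestion only treats the listed Kleinian/elliptic $Q$, whereas the lemma is asserted for arbitrary weighted-homogeneous $Q$ with isolated singularity.

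For comparison, the paper's own proof bypasses the generic fibre entirely and is much shorter: it first reduces to $\beta=0$ (by a linear change of variable when $a=b$, or by the leading-term inclusion $\gr Z_{\alpha x+\beta y}\subseteq Z_x$ when $a<b$), and then observes directly that $\C[x]$ is algebraically closed in $\C[x,y,z]/(Q)$ by a homogeneity argument.
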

(Note that the inclusion
$\C[[\alpha x + \beta y]] \subset \OO_{\widehat X}$ makes sense since, e.g., $Q \notin \C[[x,y]]$).
\begin{proof}
  We first claim that it is sufficient to consider the case where
  either $\alpha = 0$ or $\beta = 0$.  First, if $a=b$ (i.e.,
  $|x|=|y|$), then we may change bases to replace $\alpha x + \beta y$
  with $x$. If $a < b$, then, letting $Z_f$ denote the Poisson
  centralizer of $f$, we have
  $\gr Z_{\alpha x+ \beta y} \subseteq Z_x$ when $\alpha \neq
  0$.
  Since $\C[[\alpha x + \beta y]] \subseteq Z_{\alpha x + \beta y}$,
  when $\alpha \neq 0$ it is sufficient to assume that $\beta = 0$.

  Suppose $\beta = 0$. Let $f\in \OO_{\hat X}$ be such that
  $\{ x, f \}=0$. We need to show that $f$ is a power series in $x$.
  Clearly, one may assume without loss of generality that $f$ is a
  polynomial. Because $X$ is generically symplectic, $f$ must be
  algebraically dependent on $x$. But it is easy to show that $\C[x]$
  is algebraically closed in $\C[x,y,z]/(Q)$ (e.g., any homogeneous
  element in the algebraic closure of $\C[x]$ would have to be a
  rational power of $x$, and only nonnegative integer powers of $x$
  occur in $\C[x,y,z]/(Q)$), so $f\in \C[x]$. The case where $\alpha = 0$ is similar.
\end{proof}

Now, in view of Theorem \ref{equivarthm}, to prove Theorem \ref{mtt}
it suffices to compute the character (Hilbert series) of the vector
spaces
\begin{equation} 
V_q := \Gamma(\PP^1, \widetilde Y \otimes \OO(-q)_0).
\end{equation}
Since $\widetilde Y$ is a quotient of a trivial pro-bundle, it
suffices to take $q \geq 0$.  We obtain the following, which, together
with Theorem \ref{equivarthm}.(ii),
immediately implies Theorem \ref{mtt}:
\begin{proposition} \label{mttprop} For $q \geq 0$,
\begin{equation} \label{vqhilb}
h(V_q;t) = \frac{t^{q(d-a)}(1-t^{d-c})}{(1-t^a)(1-t^b)(1-t^c)}.
\end{equation}
\end{proposition}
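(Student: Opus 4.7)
My plan is to prove Proposition \ref{mttprop} by setting up a short resolution of $\widetilde Y$ as a pro-sheaf on $\PP^1$ and then carefully computing sheaf cohomology, with attention to a nontrivial $H^1$ contribution for $q \geq 2$.

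First, I would combine Lemma \ref{kerbr} (the kernel of $\{u,-\}$ on $\OO_{\widehat X}$ is $\C[[u]]$) with the lemma just before Lemma \ref{vbdlelemma} (that $\langle x,y\rangle \cap \{\OO, \OO\} = 0$ in the non-Kleinian-$A_{m-1}$ case) to establish the four-term exact sequence of pro-sheaves on $\PP^1$
\begin{equation*}
0 \to \C[[u]] \otimes \OO(-1)_a \to \OO_{\widehat X} \otimes \OO(-1)_a \xrightarrow{\{u,-\}} \OO_{\widehat X} \to \widetilde Y \to 0,
\end{equation*}
where the kernel is identified with $\prod_{k \geq 0} \OO(-k-1)_{(k+1)a}$, and the cokernel is $\widetilde Y$ (rather than just $Y$) because the disjointness $V \cap \{\OO,\OO\} = 0$ gives an injection $V \hookrightarrow \OO_{\widehat X}/\{u,\OO_{\widehat X}\}$ contributing exactly the $\OO(0)_a \oplus \OO(0)_b$ summand that promotes $Y$ to $\widetilde Y$.

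Next, I would tensor this resolution with $\OO(-q)_0$ and split it at the middle into two short exact sequences via the image $Z_q$ of $\{u,-\}$. Taking the associated long exact sequences of sheaf cohomology on $\PP^1$, and invoking the identity $H^i(\OO_{\widehat X} \otimes L) = \OO_{\widehat X} \otimes_{\C} H^i(L)$ for line bundles $L$, Theorem \ref{equivarthm}.(ii), and Serre duality on $\PP^1$ (with $\omega_{\PP^1} = \OO(-2)_{a-b}$ equivariantly), I can write down explicit Hilbert series for every cohomology group that appears; the $\C[[u]]$-summand contributes a geometric series in $k$ that sums in closed form.

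The hard part will be the $H^1$ contribution: for $q \geq 2$, $H^1(\widetilde Y \otimes \OO(-q)_0)$ is not automatically zero, because the induced map $\{u,-\}: H^1(\OO_{\widehat X} \otimes \OO(-q-1)_a) \to H^1(\OO_{\widehat X} \otimes \OO(-q)_0)$ has image contained in $\{u, \OO_{\widehat X}\} \otimes H^1(\OO(-q)_0)$, which by the cited lemma strictly misses the $V \subset \OO_{\widehat X}$ factor of the target. I would identify the resulting cokernel by pushing the resolution one step further (relating it to the $\C[[u]]$-subbundle and the $V$-contribution), compute its Hilbert series, add it to the Euler characteristic $\chi(\widetilde Y \otimes \OO(-q)_0)$ coming from the first step, and simplify via standard geometric-series identities in $t^a$, $t^b$, $t^c$, $t^{a-b}$, and $t^{d-a}$ to arrive at the claimed closed form $\frac{t^{q(d-a)}(1-t^{d-c})}{(1-t^a)(1-t^b)(1-t^c)}$.
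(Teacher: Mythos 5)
Your proposal takes a genuinely different route from the paper---via a four-term sheaf resolution and long exact cohomology sequences on $\PP^1$---whereas the paper identifies $V_q$ directly with the space of polynomial solutions to a system of differential equations (a Poincar\'e-lemma style argument with the operators $D_x, D_y$) and reduces the count to commutative algebra (Lemma \ref{alglemm}). Your setup is correct as far as it goes: the four-term exact sequence of pro-sheaves is right, the kernel $\C[[u]]\otimes\OO(-1)_a\cong\prod_{k\geq 0}\OO(-k-1)_{(k+1)a}$ follows from Lemma \ref{kerbr}, and the cokernel is $\widetilde Y$ rather than $Y$ because $V\cap\{\OO_{\widehat X},\OO_{\widehat X}\}=0$.

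However, the step you flag as the hard part---computing $H^1(\widetilde Y\otimes\OO(-q)_0)$ for $q\geq 2$---is where the proposal has a genuine gap, and the specific mechanism you sketch does not work. You claim the image of the induced map $H^1(\OO_{\widehat X}\otimes\OO(-q-1)_a)\to H^1(\OO_{\widehat X}\otimes\OO(-q)_0)$ lies in ``$\{u,\OO_{\widehat X}\}\otimes H^1(\OO(-q)_0)$,'' but this expression is not well-defined: $\{u,\OO_{\widehat X}\}$ depends on the point $(\alpha:\beta)\in\PP^1$ (it is the fiber of a nontrivial sub-pro-bundle, not a fixed subspace of $\OO_{\widehat X}$), so it cannot be tensored with $H^1(\OO(-q)_0)$ to give a subspace of $\OO_{\widehat X}\otimes H^1(\OO(-q)_0)$. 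More to the point, the bundle map $\{u,-\}$ is not of the constant form $\phi_0\otimes\operatorname{id}_{\OO(-1)_a}$, so no such tensor-product factorization of the induced $H^1$-map exists. There is also a circularity risk: determining $H^1(\widetilde Y\otimes\OO(-q)_0)$ is essentially equivalent to knowing the equivariant splitting $\widetilde Y\cong\prod_i\OO(n_i)_{m_i}$, which is the content of Theorem \ref{mtt} being proved, and ``pushing the resolution one step further'' does not escape this because the relevant connecting maps for the $\prod_k\OO(-k-q-1)_{(k+1)a}$ summands again vary with $(\alpha:\beta)$. The paper avoids $H^1$ entirely: it takes global sections only at $q=0$ (where $H^1$ vanishes because the presenting kernel is a product of $\OO(-1)$'s and $Y$ is torsion-free), identifies $V_q$ with the sections of $\widetilde Y$ vanishing to order $q$ at a point, and then counts those explicitly via the system \eqref{fieqns}, the uniqueness lemma for $H$, and Lemma \ref{alglemm}. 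To make your route work you would need a genuinely new mechanism for computing $H^1$, and as written the proposal does not supply one.
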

\begin{proof}
  We may identify $V_q$ with the space of global sections of
  $\widetilde Y$ which vanish to order $q$ at $(0,1)$, using
  the injections of sheaves,
  $\OO(n)_m \otimes \OO(-1)_0 \cong \OO(n-1)_m \into \OO(n)_m$, which,
  on global sections, are the inclusions of sections vanishing at
  $(0,1)$.\footnote{The map to sections vanishing at $(1,0)$ comes
    from $\OO(n-1)_{m+a-b} \into \OO(n)_m$, so we could have instead
    used sections vanishing to order $q$ at $(0,1)$, but with weights
    shifted by $q(a-b)$. The argument goes through in the same way, swapping
    $x$ with $y$ and $a$ with $b$.}

To prove the proposition, we first rewrite the condition of vanishing
to order $q$ at $(0,1)$, by explicitly describing the
subspace $\{\alpha x + \beta y, \OO_{\widehat X}\}$. As in the introduction,
let $f_x, f_y, f_z$ denote the partial derivatives of
$f \in \C[[x,y,z]]$ with respect to $x, y,$ and $z$.  We have
$\{x,y\} = Q_z$, $\{y,z\} = Q_x$, and $\{z,x\} = Q_y$. Thus,
\begin{equation} \label{brfla}
  \{\alpha x + \beta y, f\} = \alpha (Q_z f_y - Q_y f_z) -
  \beta (Q_z f_x - Q_x f_z).
\end{equation}
In particular, we deduce that
\begin{equation}
  \{\alpha x + \beta y, \C[[x,y]]\} = \C[[x,y]] Q_z.
\end{equation}

We may use this to compute the global sections of $\widetilde Y$.  Let $T$ be a
graded complement to $\C[[x,y]]$ in $\OO_{\widehat X}$, so that
$\OO_{\widehat X} = \C[[x,y]] \oplus T$.  By Lemma \ref{kerbr},
the kernel of $\{\alpha x+ \beta y, -\}$ lies in $\C[[x,y]]$.  Hence, we have an
exact sequence
\begin{equation}
0 \rightarrow \OO(-1)_{a} \otimes T \rightarrow \OO^0_{\widehat X}/ \C[[x,y]]Q_z \rightarrow Y \rightarrow 0,
\end{equation}
and since $Y$ is torsion-free, we conclude by taking global sections that 
\begin{equation} \label{gamyfla}
\Gamma(\PP^1, Y) \cong \OO_{\widehat X}^0/ \C[[x,y]]Q_z.
\end{equation}

To describe $V_q$ for $q > 0$, it will be convenient to
sometimes work in the larger ring $\C[[x,y,z]][Q_z^{-1}]$, and define
the operators
\begin{equation} \label{dxydefn}
  D_x := \partial_x - \frac{Q_x}{Q_z} \partial_z, \quad D_y := \partial_y - \frac{Q_y}{Q_z} \partial_z.
\end{equation}
These operators make sense on $\C[[x,y,z]][Q_z^{-1}] / (Q)$ since
$D_x = -\frac{1}{Q_z} \ad y$ and $D_y = \frac{1}{Q_z} \ad x$ (or
because $D_x(Q) = 0 = D_y(Q)$). Moreover, it is clear that, if we
think of $z$ as implicitly dependent on $x$ and $y$ via $Q = 0$, then
$D_x$ and $D_y$ are the derivatives with respect to $x$ and $y$.  In
particular, on $\C[[x,y]]$, $D_x$ and $D_y$ restrict to the usual
derivative with respect to $x$ and $y$.  Finally, we have
\begin{equation}
[D_x, D_y] = 0,
\end{equation}
which follows from the above (one may also directly compute
that $[D_x, D_y](z) = 0$).

It now follows from \eqref{brfla}, \eqref{gamyfla}, and \eqref{dxydefn}
that $V_q$ is identified with the solutions $G \in \OO_{\widehat X}$ (modulo the subspace $\C[[x,y]]Q_z$) 
of the equations
\begin{multline} \label{fieqns} 
\exists F_1, \ldots, F_q \in  \C[[x,y,z]]/(Q) \text{ s.t. }  
G = Q_z D_x F_1, \\
D_y F_1 = D_x F_2, \ldots, 
D_y F_{q-1} = D_x F_q.
\end{multline}

We break up most of the rest of the proof into lemmas.  It suffices to consider \emph{homogeneous} solutions to the above equations, which we do from now on. In particular, this means we can (and will) work in the uncompleted rings $\C[x,y,z], \C[x,y,z]/(Q), \C[x,y,z][Q_z^{-1}]/(Q)$, etc.
\begin{lemma}
Every homogeneous solution to \eqref{fieqns} has the form
\begin{equation} \label{figheqns}
F_i = D_y^{i-1} D_x^{q-i} H, \quad G = Q_z D_x^q H,
\end{equation}
for some homogeneous element $H \in \C[x,y,z]/(Q)$, which is
uniquely determined by the $F_i$.
\end{lemma}
The proof mimics a standard proof of the
Poincar\'e lemma.
\begin{proof}
We show, inductively on $j$, that there exist unique homogeneous
$H_{i,j}$, for $i+j \leq q$ and $i \geq 1$, $j \geq 0$, 
such that (for $j \geq 1$)
\begin{equation}
D_x H_{i,j} = H_{i,j-1}, \quad D_y H_{i,j} = H_{i+1, j-1}, \quad H_{i,0} := F_i.
\end{equation}
Then, it follows that $H = H_{1,q-1}$ has the desired property.

To do this, we use the formula, valid for all homogeneous $f$ of
positive degree:
\begin{equation}
|f| f = ax D_x f + by D_y f.
\end{equation}
Thus, given any $g$ and $h$, there exists $f$ such that $D_x f = g$
and $D_y f = h$ if and only if $D_y g = D_x h$, and in this case,
$|f| f = ax g + ay h$.  The inductive step therefore follows by
setting $g = H_{i,j}$ and $h = H_{i+1,j}$.
\end{proof}
Next, we have to find what possible $H$ can arise. This is answered by
\begin{lemma}
  Let $H \in \C[x,y,z]$ be homogeneous.  Then, the following
  are equivalent:
\begin{enumerate}
\item[(i)]  For all polynomials $f$ of degree $\leq n$, we have
\begin{equation} \label{hz1}
f(D_x, D_y) H \in \C[x,y,z] + Q \cdot \C[x,y,z][Q_z^{-1}].
\end{equation}
\item[(ii)] We have
\begin{equation} \label{hz2}
H_z \in (Q^n) + (Q)_z.
\end{equation} 
\end{enumerate}
\end{lemma}
Here and below, $(Q)_z$ is the partial derivative of the
\emph{ideal} $(Q) \subset \C[x,y,z]$, not the element.  

As a consequence of the lemma, we deduce that the possible $H$ in
\eqref{figheqns} are exactly those satisfying \eqref{hz2} for
$n = q-1$.
\begin{proof}
  The implication (ii) $\Rightarrow$ (i) is easy: since $D_x, D_y$ are
  well-defined on $\C[[x,y,z]][Q_z^{-1}] / (Q)$, given \eqref{hz2}, we may
  assume that $Q^n \mid H$, and \eqref{hz1} follows immediately.

  Next, we prove (i) $\Rightarrow$ (ii) inductively on $n$.  For $n=0$
  the assertion is vacuous.  Since the assertion does not depend on
  the choice of $H$ modulo $(Q)$, we may assume inductively that
  $Q^{n-1} \mid H_z$ and \eqref{hz1} holds. We must prove that, up
  to adding an element of $(Q)$ to $H$, we have $Q^n \mid H_z$.

Letting $f(D_x, D_y) = D_x$ in \eqref{hz1}, we have
\begin{equation}
H_x - \frac{Q_x}{Q_z} H_z \in \C[x,y,z] + Q \cdot \C[x,y,z][Q_z^{-1}].
\end{equation}
It follows that
\begin{equation}
Q_x H_z \in Q_z \C[x,y,z] + Q \cdot \C[x,y,z][Q_z^{-1}],
\end{equation}
but since also $H_z \in \C[x,y,z]$, we in fact have
\begin{equation}
Q_x H_z \in (Q, Q_z).
\end{equation}
Similarly, using $f(D_x, D_y) = D_y$ in \eqref{hz1}, we have
\begin{equation}
Q_y H_z \in (Q,Q_z).
\end{equation}
By the following Lemma \ref{alglemm}, $H_z \in (Q, Q_z)$ itself.  This proves \eqref{hz2} in the case $n=1$.  

We proceed now under the assumption that $n \geq 2$.  Write $H_z = Q^{n-1} h$. Then 
\begin{equation}
D_x H  - H_x \in Q \cdot \C[x,y,z][Q_z^{-1}].
\end{equation}
As a consequence, \eqref{hz1} implies that
\begin{equation}
f(D_x, D_y) H_x \in \C[x,y,z] + Q \C[x,y,z][Q_z^{-1}],
\end{equation}
for all polynomials $f$ of degree $\leq n-1$. By the inductive hypothesis
applied to $H_x$, we conclude that
\begin{equation}
H_{xz} \in (Q^{n-1}) + (Q)_z.
\end{equation}
Substituting $H_z = Q^{n-1} h$, we find that
\begin{equation}\label{qxhz1}
(n-1)Q^{n-2} Q_x h \in (Q^{n-1}) + (Q)_z.
\end{equation}
Next, note that 
\begin{equation} \label{qzqn}
(Q)_z \cap (Q^{n-2}) = (Q^{n-1})_z := \{g_z \mid g \in (Q^{n-1})\},
\end{equation}
 since if $Q \nmid g$, we have $\partial_z (Q^j g) = j Q^{j-1} Q_z g + Q^j g_z$, which is in $(Q^{j-1})$ but not $(Q^j)$. Applying this to \eqref{qxhz1}, we deduce that
\begin{equation}
Q^{n-2} Q_x h \in (Q^{n-1}) + (Q^{n-1})_z = Q^{n-2} (Q, Q_z).
\end{equation}
Dividing by $Q^{n-2}$, we get that
\begin{equation}
Q_x h \in (Q, Q_z),
\end{equation}
and similarly,
\begin{equation}
Q_y h \in (Q, Q_z).
\end{equation}
Thus, again applying Lemma \ref{alglemm} below, we find that
$h \in (Q, Q_z)$, and hence
\begin{equation}
H_z \in Q^{n-1} (Q, Q_z) \subseteq (Q^n) + (Q)_z. \qedhere
\end{equation}
\end{proof}
The above proof rested on the following basic result:
\begin{lemma}\label{alglemm}
If $f \in \C[x,y,z]$ satisfies
\begin{equation}\label{qxycond}
Q_x f \in (Q, Q_z), \quad Q_y f \in (Q, Q_z),
\end{equation}
then it follows that
\begin{equation}
f \in (Q, Q_z).
\end{equation}
\end{lemma}
\begin{proof}
  We claim that \eqref{qxycond} implies that the ideal
  $(f) \subseteq \C[x,y,z]/(Q,Q_z)$ is a torsion module supported at
  the origin.  Since (as we will recall), such torsion modules cannot
  be submodules of $\C[x,y,z]/(Q,Q_z)$, we will deduce that $f=0$.

  To prove the claim, note that, since the singularity at the origin
  is isolated, at every closed point in $Z(Q,Q_z)$ other than the
  origin, either $Q_x$ or $Q_y$ must be nonvanishing. Hence, in every
  local ring other than at the origin, either $Q_x$ or $Q_y$ is a
  unit. Thus, $f$ is zero in every local ring other than the origin,
  i.e., $(f)$ is a torsion module supported at the origin.

  Next, note that, since $Q$ is irreducible, $\C[x,y,z]/(Q)$ is a
  domain, and hence $Q, Q_z$ form a regular sequence in $\C[x,y,z]$.
  That is, we have a Koszul resolution of $\C[x,y,z]/(Q,Q_z)$ of
  length two. Since any torsion module $M$ supported at a
  point satisfies $\Ext^i(M, \C[x,y,z]) = 0$ for $i < 3$, 
  the long exact sequence of cohomology implies that
  $\Ext^i(M, \C[x,y,z]/(Q,Q_z)) = 0$ for $i < 1$.  Thus,
  $\Hom((f), \C[x,y,z]/(Q,Q_z)) = 0$.  This implies that $f = 0$, as
  desired.
\end{proof}
To complete the proof of the proposition, first note that solutions to
\eqref{fieqns} such that $G \in \C[x,y] Q_z$, which form the subspace
we wanted to quotient by, are exactly those for which the
$F_i \in \C[x,y]$, and hence also $H \in \C[x,y]$.  Therefore, to
compute the Hilbert series of $V_q$, it remains, for each degree
$|G| = m$, to find the dimension of the space of homogeneous 
$H$ of degree $|G|+qa-(d-c)$ such that $Q^{q-1} \mid H_z$, modulo the
space of such $H$ which are polynomials in $x$ and $y$.  In other
words, we seek the dimension of the space of elements $H_z$ of degree
$|G|+qa-d$ that are multiples of $Q^{q-1}$, modulo $(Q)_z$.  By
\eqref{qzqn}, this is equivalent to considering
$(Q^{q-1}) = \{Q^{q-1} g\mid g \in \C[x,y,z]\}$ modulo
$(Q^q)_z = \{Q^{q-1} (Q f_z + q Q_z f) \mid f \in \C[x,y,z]\}$.  That
is, writing $H_z = Q^{q-1} g$, our problem reduces to considering the
space of polynomials $g$ of degree $|g| = |G|+q(a-d)$ modulo
$\langle Q f_z + q Q_z f \mid |f|=|g|+(c-d) \rangle$.  We conclude
that the Hilbert series of $V_q$ is exactly \eqref{vqhilb}.
\end{proof}

\section{Proof of Theorem \ref{mt} when $X$ is not of type
  $A_{m-1}$}
\label{mtpfsec} We first consider the case where $X$ is not a
singularity of type $A_{m-1}$. The main step left is to give the
promised proof of Lemma \ref{vbdlelemma}.
\begin{proof}[Proof of Lemma \ref{vbdlelemma}]
Let $\mathcal{G}_X = \exp(\g_X)$ be the group of Poisson
automorphisms of $\OO_{\widehat X}$ generated by the flow of Hamiltonian vector
fields $\xi_f$ for $f \in \OO_{\widehat X}$ (so, $\xi_f|_g = \{f,g\}$, identifying
the tangent space at every point of $\OO_{\widehat X}$ with $\OO_{\widehat X}$ itself).
It is clear that
$\Fg(X)^{\g_X} = \Fg(X)^{\mathcal{G}_X} = \C[(V \setminus \{0\})
\times \OO_{\widehat X}^0]^{\mathcal{G}_X}$.

The idea behind the proof of the lemma is to view the fibers of $Y$ as slices
in $\alpha x+ \beta y+\OO_{\widehat X}^0$ to the orbits of the group
$\mathcal{G}_X$, in the following sense.

For a fixed $(\alpha, \beta)$, note that
$\alpha x + \beta y + \OO_{\widehat X}^0$ is stable under
$\mathcal{G}_X$ (since $\OO_{\widehat X}^0$ contains all Poisson
brackets).  Hence, we have a map
$\OO_{\widehat X}/\mathcal{G}_X \rightarrow V$ with a canonical zero
section
$V := \langle x,y\rangle \subseteq \OO_{\widehat X}/\mathcal{G}_X$.
Let $U'$ be the pro-bundle over $V \setminus 0$ whose fiber at
$\alpha x + \beta y$ is the tangent space to the fiber of the above
map.  In other words,
\begin{equation}
U'_{\alpha x + \beta y} = T_{\alpha x + \beta y} ((\alpha x + \beta y + \OO_{\widehat X}^0) / \mathcal{G}_X).
\end{equation}
The pro-bundle $U'$ is evidently pulled back from a pro-bundle on $\PP^1$. Call this $U$.
\begin{claim} \label{isoclaim} 
\begin{enumerate}
\item[(i)] The punctured plane
  $V \setminus \{0\} \subseteq \OO_{\widehat X}/\mathcal{G}_X$
  consists of smooth points.
\item[(ii)] We have a canonical isomorphism of pro-bundles
\begin{equation} \label{isomap}
U \cong Y.
\end{equation}
\end{enumerate}
\end{claim}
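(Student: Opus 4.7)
The plan is to prove (ii) by a direct tangent-space calculation and then deduce (i) from a formal slice construction.

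For (ii), fix $p := \alpha x + \beta y \in V \setminus \{0\}$. The projection $\OO_{\widehat X} = V \oplus \OO_{\widehat X}^0 \to V$ is $\mathcal{G}_X$-equivariant because the infinitesimal action $\xi_f(p) = \{f, p\}$ lies in $\OO_{\widehat X}^0$ (which was chosen to contain all Poisson brackets), so the map $\OO_{\widehat X}/\mathcal{G}_X \to V$ is well-defined with fibers $(p + \OO_{\widehat X}^0)/\mathcal{G}_X$. The tangent space to such a fiber at $p$ is $\OO_{\widehat X}^0$ modulo the tangent space to the $\mathcal{G}_X$-orbit through $p$, namely $\{\xi_f(p) : f \in \OO_{\widehat X}\} = \{p, \OO_{\widehat X}\}$. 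Hence $U'_{(\alpha,\beta)} = \OO_{\widehat X}^0/\{p, \OO_{\widehat X}\} = Y'_{(\alpha,\beta)}$, and since this identification is $\C^*$-equivariant in $(\alpha,\beta)$ and depends only on the line $[\alpha : \beta]$, it descends to the desired isomorphism $U \cong Y$ on $\PP^1$.

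For (i), I would construct a formal chart near each $p \in V \setminus 0$. Choose a graded complement $W \subseteq \OO_{\widehat X}^0$ to the subspace $\{p, \OO_{\widehat X}\}$, so that $\OO_{\widehat X}^0 = W \oplus \{p, \OO_{\widehat X}\}$, and consider the map $V \times W \to \OO_{\widehat X}/\mathcal{G}_X$, $(p', w) \mapsto [p' + w]$. The key iterative step is: given $p + f$ with $f \in \OO_{\widehat X}^0$, decompose $f = w_1 + \{p, g_1\}$ using the chosen splitting, and apply $\exp(-\xi_{g_1})$ to produce $p + w_1 + f'$ with $f' \in \OO_{\widehat X}^0$ of strictly higher weight than $f$. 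Iterating produces the unique slice representative in $p + W$; convergence in the weight pro-topology is guaranteed by the lemma preceding Lemma \ref{vbdlelemma} that every nonzero homogeneous Hamiltonian vector field $\xi_g$ has positive weight in the non-$A_{m-1}$ case. Uniqueness modulo the infinitesimal stabilizer $\C[[p]]$ of Lemma \ref{kerbr} follows by the same weight-filtration argument, since $\C[[p]]$ acts trivially on $p$ to first order. This realizes a formal neighborhood of $V \setminus 0$ in $\OO_{\widehat X}/\mathcal{G}_X$ as a pro-vector bundle, proving smoothness.

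The main obstacle is making the iterative slice construction rigorous in the pro-algebraic setting and verifying that the resulting map $V \times W \to \OO_{\widehat X}/\mathcal{G}_X$ induces an isomorphism of completed local rings (equivalently, that the corresponding identification of invariants $\Fg(X)^{\g_X}$ with regular functions on the total space of $Y'$ holds, which is the content of Lemma \ref{vbdlelemma}). The positive-weight hypothesis on Hamiltonian vector fields is doing double duty: it drives the convergence of the iterative exponentiation and simultaneously ensures that distinct points of $V \setminus 0$ remain in distinct $\mathcal{G}_X$-orbits, so that $V \setminus 0$ embeds into the quotient as claimed.
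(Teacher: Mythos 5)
Your argument is correct in outline but takes a genuinely different route from the paper's. The paper never exponentiates: it works purely infinitesimally, observing that $\mathcal{G}_X$ acts linearly on $\OO_{\widehat X}$, so the tangent space to the orbit through $\alpha x + \beta y + \hbar f$ is $\{\alpha x + \beta y + \hbar f, \OO_{\widehat X}\}$, and then proving that this $\C[[\hbar]]$-module is \emph{saturated} (its quotient is $\hbar$-torsion-free). The only inputs are Lemma \ref{kerbr} and the fact that $\C[[\alpha x + \beta y + \hbar f]]$ lies in the Poisson centralizer $Z_{\alpha x + \beta y + \hbar f}$ and reduces to $Z_{\alpha x + \beta y}$ mod $\hbar$. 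This flatness in every normal direction yields smoothness along $V \setminus \{0\}$ and the identification of the normal bundle with $Y$ simultaneously, in a few lines and with no convergence questions. Your construction of an actual formal slice by iterated application of $\exp(\xi_g)$ is essentially the strategy the paper reserves for the type $A_{m-1}$ case (Proposition \ref{kleinprop}); it buys a concrete local model $V \times W$ for the quotient, at the cost of controlling the convergence of the iteration (which, as you say, the positivity of degrees of Hamiltonian vector fields provides) and of proving uniqueness of the slice representative.

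Two points need attention. First, the logical order: the identification $T_{\alpha x + \beta y}\bigl((\alpha x + \beta y + \OO_{\widehat X}^0)/\mathcal{G}_X\bigr) = \OO_{\widehat X}^0/\{\alpha x + \beta y, \OO_{\widehat X}\}$ in your part (ii) is not automatic for a quotient that might be singular; injectivity of the natural map from $\OO_{\widehat X}^0/\{\alpha x + \beta y, \OO_{\widehat X}\}$ to the tangent space of the quotient requires invariant functions to separate normal directions to first order, which is exactly what your slice in (i) supplies. So (i) must come first. Second, you must not lean on Lemma \ref{vbdlelemma} to finish: the Claim is an input to the proof of that Lemma, so your parenthetical identifying the remaining work with "the content of Lemma \ref{vbdlelemma}" would make the argument circular if taken literally. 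What actually remains for the Claim is only the uniqueness of the slice point, i.e., that $\exp(\xi_g)(p+w) = p+w'$ with $w,w'$ in the complement $W$ forces $w=w'$; this follows from Lemma \ref{kerbr} together with a Campbell--Baker--Hausdorff reduction to the case where no homogeneous component of $g$ lies in $\C[[\alpha x + \beta y]]$, exactly as in the uniqueness step of Proposition \ref{kleinprop}. The passage from this first-order statement to the full Lemma \ref{vbdlelemma} is handled separately in the paper, via the $I_V$-adic filtration, Claim \ref{grclaim}, and the polynomiality supplied by Theorem \ref{mtt}, and is not your burden here.
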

We will prove this claim below. For now, we assume it.  Introduce the
filtration on $\C[\OO_{\widehat X}/\mathcal{G}_X]$ by powers of the
ideal $I_V$ of functions vanishing on the plane $V$.  By the claim,
$V \setminus \{0\}$ consists of smooth points, and hence
\begin{equation}
\gr_{I_V} \C[\OO_{\widehat X}/\mathcal{G}_X] = \C[U'] \cong \C[Y'],
\end{equation}
where the latter denotes the global functions on the total space of
the pro-bundle $Y'$.  The total space of $Y'$ (a pro-bundle over $V \setminus \{0\}$), is the same as the total space of $E = Y \oplus \OO(-1)_{a}$ (a pro-bundle over $\PP^1$), and we deduce that
\begin{equation}
\gr_{I_V} \C[\OO_{\widehat X}/\mathcal{G}_X] \cong \C[E].
\end{equation}

By Theorem \ref{mtt}, $\C[E]$ is in fact a polynomial
algebra on homogeneous generators, finitely many in each degree. 
 Hence, the lemma follows from 
 \begin{claim} \label{grclaim} Let $A$ be a graded commutative algebra
   $A$ with a descending graded filtration
   $A = F_0 A \supseteq F_1 A \supseteq \cdots$ with $\cap_i F_i A = 0$ such
   that $\gr A \cong \Sym W$, where $W$ is a bigraded vector space which
   is finite-dimensional in each bidegree. Then,
\begin{equation}\label{symwiso}
A = \Sym \widetilde W \cong \gr A,
\end{equation}
for any graded lifting $\widetilde W$ of $W$ to $A$.  
\end{claim}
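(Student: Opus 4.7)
The plan is to pick a bigraded lift $\widetilde W \subseteq A$ of $W \subseteq \gr A$, form the induced graded algebra homomorphism $\phi \colon \Sym \widetilde W \to A$, and prove that $\phi$ is an isomorphism via a filtered-to-graded comparison. The lift exists bidegree-by-bidegree since each projection $F_i A \twoheadrightarrow \gr_i A$ is surjective, and $\phi$ then comes from the universal property of the symmetric algebra.

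Next I would equip $\Sym \widetilde W$ with the descending filtration in which $F_j \Sym \widetilde W$ is spanned by monomials in the chosen generators whose total lifted second-grading is at least $j$. With this convention, $\phi$ is a filtered map, and the induced map $\gr \phi \colon \Sym W \to \gr A$ coincides with the given bigraded isomorphism. Injectivity of $\phi$ is then automatic: a hypothetical nonzero $b \in \ker \phi$ has a leading component $\bar b \in \Sym W$ in the smallest second-grading $i_0$ where $b$ is nonzero, and one finds $\gr \phi(\bar b) = 0$ in $\gr_{i_0} A$, contradicting that $\gr \phi$ is an isomorphism; separation of the filtration on $\Sym \widetilde W$ is free since that algebra is itself bigraded.

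For surjectivity I would work one weight $w$ at a time. The crucial intermediate step is to check that the filtration on $A_w$ is bounded, i.e., $F_N A_w = 0$ for some $N$ depending on $w$. Granted this together with finite-dimensionality of $A_w$, the restriction $\phi|_{(\Sym \widetilde W)_w}$ becomes a filtered linear map between finite-dimensional vector spaces inducing an isomorphism on associated gradeds, and is therefore itself an isomorphism. Boundedness follows from positivity: the generators of $W$ have weights and second-gradings that force $(\Sym W)_{w,i}$ to vanish for $i$ sufficiently large relative to $w$, and then the implication $a \in F_i A_w$ with $\gr_i A_w = 0 \Rightarrow a \in F_{i+1} A_w$, combined with $\bigcap_i F_i A = 0$, forces termination via an easy descending induction.

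The main obstacle is this boundedness of the filtration on each weight piece, which is not formally implied by the bare hypotheses but holds in our setting thanks to the positivity of the weight grading on $\OO_X$ and the explicit Hilbert series of Proposition \ref{mttprop}. Modulo that step, the argument is a routine filtered-to-graded comparison.
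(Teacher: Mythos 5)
Your argument is correct and follows essentially the same route as the paper: injectivity from the isomorphism on associated graded, and surjectivity degree-by-degree using eventual vanishing of the filtration in each weight. One small point: you say the boundedness $F_N A_w = 0$ "is not formally implied by the bare hypotheses," but once one grants (as the paper does) that each $(F_i A)_w$ is finite-dimensional, boundedness follows immediately from $\bigcap_i F_i A = 0$ — a strictly decreasing chain of finite-dimensional subspaces with trivial intersection must terminate at zero — so your appeal to the explicit Hilbert series is unnecessary, though it does of course supply the requisite finiteness.
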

\begin{proof}
  We have a canonical morphism of algebras,
  $\iota: \Sym \widetilde W \to A$, which becomes an isomorphism when we
  take associated graded.  Hence, it must be a monomorphism. To prove surjectivity, fix a degree $n \geq 0$. The surjectivity of $\Sym W \rightarrow \gr A$ says that
\begin{equation} \label{grsurj}
\iota(\Sym \widetilde W)_n + (F_{i+1} A)_n \supseteq (F_i A)_n, \forall i.
\end{equation}
Since $\cap_i F_i A = 0$ and each $(F_i A)_n$ is finite-dimensional,
there must exist $j \geq 0$ such that $(F_{j+1} A)_n = 0$. We deduce from \eqref{grsurj} that
$\iota(\Sym \widetilde W)_n \supseteq (F_j A)_n$, and applying
\eqref{grsurj} $j$ more times, we deduce that
$\iota(\Sym \widetilde W)_n \supseteq (F_0 A)_n = A_n$.
\end{proof}
We apply this for $A = \C[\OO_{\widehat X}/\mathcal{G}_X]$, with the filtration by  powers of $I_V$, and $A_0 = \C[E]$.
\end{proof}

\begin{proof}[Proof of Claim \ref{isoclaim}]
Since $\OO_{\hat X}$ is a
$\mathcal{G}_X$-representation, for any element $f \in \OO_{\hat X}$, we have
\begin{equation}
T_{\alpha x + \beta y + \hbar f} (\OO_{\hat X}/\mathcal{G}_X) = \{\alpha x + \beta y + \hbar f, \OO_{\hat X}\}.
\end{equation}
It suffices to show that the RHS is saturated as a $\C[[\hbar]]$-module, i.e.,
if 
\begin{equation}\label{nonsat}
\{\alpha x + \beta y + \hbar f, g\} \in \hbar \OO_{\hat X}[[\hbar]],
\end{equation}
then
\begin{equation} \label{nonsat2}
g \in Z_{\alpha x + \beta y + \hbar f} + \hbar \OO_{\hat X}[[\hbar]],
\end{equation}
where $Z_{\alpha x + \beta y + \hbar f}$ is the Poisson centralizer.
Since $Z_{\alpha x + \beta y} = \C[[\alpha x + \beta y]]$,
\eqref{nonsat} can only hold if
$g \in \C[[\alpha x + \beta y]] + \hbar \OO_{\hat X}[[\hbar]]$.  Then,
using that
$\C[[\alpha x + \beta y + \hbar f]] \subseteq Z_{\alpha x +\beta y +
  \hbar f}$
(in fact, this is an equality), we see that \eqref{nonsat2} holds.
\end{proof}

Now, Theorem \ref{mt} (in the non-type $A$ case) follows from Theorem
\ref{mtt}, since we have identified the regular invariant functions
with the regular functions on the total space of
$E = Y \oplus \OO(-1)_{a}$.  That is, we take the global sections of
$\Sym E^* = \Sym Y^* \otimes \bigoplus_{m \geq 0} (\OO(1)_{-a})^{\otimes m}$. If we decompose
$Y = \prod_i \OO(n_i)_{m_i}$, then the desired regular functions form
a polynomial algebra on the generators
$f_i \in \Gamma(\PP^1,\OO(-n_i)_{-m_i} \otimes (\OO(1)_{-a})^{\otimes
  n_i})
\setminus \{0\}$,
of weight $-m_i-n_i a$ and polynomial degree $n_i+1$ (these vector
spaces are one-dimensional, so any nonzero $f_i$ will work), together
with the generators $\alpha, \beta$, which are the sections of
$\OO(1)_{-a}$.  Letting $L$ again denote the span of these generators, we
see from \eqref{chiyfla} that
\begin{multline}
h(L;t^{-1}) = \sum_{i=0}^{\infty} s^{i+1} t^{ia} t^{i(d-a)} \frac{(1-t^{d-a})(1-t^{d-b})(1-t^{d-c})}{(1-t^a)(1-t^b)(1-t^c)} \\ = \frac{s(1-t^{d-a})(1-t^{d-b})(1-t^{d-c})}{(1-t^a)(1-t^b)(1-t^c)(1-t^d s)} = \frac{h(J_Q;t)s}{1-t^d s},
\end{multline}
using the well-known formula for $h(J_Q;t)$ (which says that
$(Q_x,Q_y,Q_z)$ form a regular sequence in $\C[x,y,z]$). This completes the proof of Theorem \ref{mt} in the non-$A_{m-1}$ case.

\section{Proof of Theorem \ref{mt} in the $A_{m-1}$
  case}\label{ampfsec}
This case involves the Poisson algebra $C[X] = \C[x,y,z]/(Q)$ for
$Q = x^m + y^2 + z^2$.
  It will be convenient to present this slightly differently, as
  $\OO_X = \C[\rx^m, \rx \ry, \ry^m] \subset \C[\rx,\ry]$, with the usual Poisson
  bracket $\{ \rx, \ry \} =1$ (this is the natural presentation from the point
  of view $X = \C^2 / (\ZZ/m)$), and $|\rx|=|\ry|=1$.  As before, define $\Fg(X)$ as in
  \eqref{fgdefn}.  We once again have that
  $\C[\OO_X]^{\mathfrak{g}_X}$ is (a completion of)
  $\Fg(X)^{\mathfrak{g}_X} \cong \Fg(X)^{\mathcal{G}_X}$, where
  $\mathcal{G}_X = \exp(\mathfrak{g}_X)$ is the group of Poisson
  automorphisms of $\OO_{\widehat X} = \C[[\rx^m,\rx \ry,\ry^m]]$ obtained from the flow
  of Hamiltonian vector fields. Also, since it will turn out that
  $\Fg(X)^{\mathcal{G}_X}$ is finite-dimensional in each
  degree,\footnote{This is also a consequence of the known fact that
    $HP_0(\C[\C^{2n}]^{(\ZZ/m)^n \rtimes S_n})$ is finite-dimensional,
    a special case of the result of the appendix to \cite{BEG}, that
    $HP_0(\C[\C^{2n}]^G)$ is finite-dimensional for all finite
    $G < \Sp_{2n}$.}
  in fact $\C[\OO_X]^{\mathfrak{g}_X} \cong \Fg(X)^{\mathcal{G}_X}$ as
  graded vector spaces. Now, Theorem \ref{mt} will follow from the
following replacement for Theorem \ref{mtt}:
  \begin{proposition}\label{kleinprop} The following set is a
    slice to the $\mathcal{G}_X$-orbits in $\C[[\rx^m,\rx \ry,\ry^m]]$ with
    nonzero coefficient of $\ry^m$ or of $\rx \ry$:
\begin{equation}\label{anslice}
\ry^{m} + \C[[\rx^m]] \langle 1, \rx \ry, (\rx \ry)^2, \ldots, (\rx \ry)^{m-2} \rangle. 
\end{equation}
\end{proposition}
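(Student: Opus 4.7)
Plan: The plan is to show that $S = \ry^m + \C[[\rx^m]]\langle 1, \rx\ry, \ldots, (\rx\ry)^{m-2}\rangle$ meets each $\mathcal{G}_X$-orbit in the open locus $U \subset \OO_{\widehat X}$ of elements with nonzero $\ry^m$- or $\rx\ry$-coefficient in exactly one point. The cornerstone will be a direct tangent-space computation at the base point $g_\ast := \ry^m$: from $\{\ry^m, \rx^i \ry^j\} = -im\, \rx^{i-1} \ry^{j+m-1}$, one reads off that $\{\ry^m, \OO_{\widehat X}\}$ is spanned by the basis monomials $\rx^a \ry^b \in \OO_{\widehat X}$ with $b \geq m-1$, while $\ker\{\ry^m, -\} = \C[[\ry^m]]$. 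The slice tangent $TS := \C[[\rx^m]]\langle 1, \rx\ry, \ldots, (\rx\ry)^{m-2}\rangle$ has basis $\{\rx^{mk+j}\ry^j : k \geq 0,\ 0 \leq j \leq m-2\}$, i.e. the monomials with $b \leq m-2$ (noting that $a \equiv b \pmod m$ together with $|a-b|<m$ automatically gives $a \geq b$). These two subspaces are manifestly complementary, so
\[
\OO_{\widehat X} = \{\ry^m, \OO_{\widehat X}\} \oplus TS,
\]
which is infinitesimal transversality of $S$ at $g_\ast$.

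For existence, I would first reduce any $g \in U$ to the canonical leading form $\ry^m + \tilde g$ with $\tilde g$ having vanishing $\ry^m$-coefficient. If $g$ has nonzero $\rx\ry$-coefficient $c$ but zero $\ry^m$-coefficient, apply $\exp(t\xi_{\ry^m})$ for small $t \neq 0$: the identity $\{\ry^m, c\rx\ry\} = -cm\ry^m$ generates a nonzero $\ry^m$-coefficient. Next apply the $\xi_{\rx\ry}$-flow (the $\C^*$-action $\rx \mapsto e^{-t}\rx, \ry \mapsto e^t\ry$) to normalize the $\ry^m$-coefficient to $1$. I then correct $\tilde g$ into $TS$ weight by weight: at weight $d$, decompose $\tilde g_d = h_d + r_d$ via the Step~1 direct sum, solve $\{s_d, \ry^m\} = -r_d$ for some $s_d \in (\OO_{\widehat X})_{d-m+2}$, and apply $\exp(\xi_{s_d})$ to cancel $r_d$. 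For $d < m$ the weight-$d$ part of $\OO_{\widehat X}$ already lies in $TS$ (spanned by $(\rx\ry)^{d/2}$ for even $d$, else zero), so no correction is needed; for $d \geq m$ the Hamiltonian weight $|s_d| = d-m+2$ is positive, so $\exp(\xi_{s_d})$ strictly raises the weights of the resulting corrections, making the iteration converge in the weight-adic topology of $\OO_{\widehat X}$.

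For uniqueness, I would first extend the transversality from $g_\ast$ to every $g \in S$: writing $g = \ry^m + h$ with $h \in TS$, the operator $\{h,-\}$ has nonnegative weight degree and acts compatibly with the Step~1 decomposition, so a formal Neumann-series argument yields the persistent direct sum $\OO_{\widehat X} = \{g, \OO_{\widehat X}\} \oplus TS$ for all $g \in S$. Given $\phi \in \mathcal{G}_X$ with $\phi(g_0) = g_1$ for $g_0, g_1 \in S$, I would express $\phi$ as a concatenation of one-parameter Hamiltonian flows, producing a path $g_t$ from $g_0$ to $g_1$; along it $\frac{d g_t}{dt} \in \{g_t, \OO_{\widehat X}\}$, whose $TS$-projection vanishes by the extended decomposition, and integrating weight by weight forces $g_1 = g_0$.

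The main obstacle will be the two convergence arguments—the weight-by-weight iterative correction in existence, and the perturbation extension of transversality for uniqueness. Both rest on the explicit identification $\ker\{\ry^m, -\} = \C[[\ry^m]]$, which furnishes a canonical graded pseudo-inverse of $\{\ry^m, -\}$ modulo $TS$, driving the entire normal-form procedure in the pro-weight topology.
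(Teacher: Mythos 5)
Your infinitesimal observation is correct and is the same one the paper relies on: from $\{\ry^m,\rx^i\ry^j\}=-im\,\rx^{i-1}\ry^{j+m-1}$ one gets that $\{\ry^m,\OO_{\widehat X}\}$ is spanned by the monomials $\rx^a\ry^b$ with $b\ge m-1$, the kernel is $\C[[\ry^m]]$, and the span of $\{\rx^a\ry^b:\;b\le m-2\}$ (which is exactly your $TS$) is a complement. However, both of your global arguments have genuine gaps, and the source of the trouble is the same in each: you are using the \emph{weight} filtration where the paper deliberately uses a \emph{lexicographic} monomial order.

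\textbf{Existence.} Your iteration kills $r_d$ by applying $\exp(\xi_{s_d})$ with $|s_d|=d-m+2$, so $\xi_{s_d}$ has degree $d-m$. The claim that "$\exp(\xi_{s_d})$ strictly raises the weights of the resulting corrections, making the iteration converge" is not correct in the sense needed: $\xi_{s_d}$ raises each weight by $d-m$, but when applied to the low-weight part $(\tilde g)_v$ with $2\le v<m$ it produces terms of weight $v+(d-m)$, which lies in $[m,d)$ whenever $2m-d\le v<m$. Those are precisely the weights you have already "fixed", and the new terms need not lie in $TS$. Concretely, for $m=4$ and $g=\ry^4+\rx\ry+\ry^8$, killing $\ry^8$ requires $s_8=\tfrac14\rx\ry^5$, and $\xi_{s_8}(\rx\ry)=-\rx\ry^5$, which has weight $6<8$ and $b=5>m-2$, hence is not in $TS$. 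So the one-pass weight-by-weight sweep does not terminate with an element of the slice, and re-fixing lower weights with lower-degree Hamiltonians reintroduces changes at higher weights. The paper sidesteps this by inducting on the lexicographic order $\rx^a\ry^b\prec\rx^{a'}\ry^{b'}$ iff $a<a'$ or ($a=a'$, $b<b'$): applying $e^{\xi_{\rx^a\ry^b}}$ with $a\ge 1$ changes $\ry^m+f$ by $\{\rx^a\ry^b,\ry^m\}$ plus \emph{strictly $\prec$-higher} terms, since every bracket $\{\rx^a\ry^b,\rx^c\ry^d\}$ with $c>0$ raises the $\rx$-exponent, and the $c=0$ terms other than $\ry^m$ raise the $\ry$-exponent. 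Lex-order induction therefore never breaks previously cleared monomials, and that is what makes the argument close.

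\textbf{Uniqueness.} Two issues. First, the "persistent direct sum" $\OO_{\widehat X}=\{g,\OO_{\widehat X}\}\oplus TS$ for all $g\in S$ is not obtained by a Neumann series in the weight filtration: writing $g=\ry^m+h$, the operator $\{h,-\}$ has degree as low as $0$ (from the $\rx\ry$-coefficient of $h$), while $\{\ry^m,-\}$ has degree $m-2\ge 1$, so $\{\ry^m,-\}^{-1}\{h,-\}$ has degree $\ge -(m-2)$ and its powers do \emph{not} tend to zero in the weight-adic topology. Second, even granting the decomposition on $S$, your path $g_t=$ (concatenation of flows applied to $g_0$) stays in the orbit but not in $S$, so the projection-to-$TS$ vanishing argument applies only at the endpoints, not along the path. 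The paper instead proves uniqueness by assuming $e^{\xi_g}(\ry^m+f)=\ry^m+f'$, using Campbell--Baker--Hausdorff to normalize $g$ so it has no $\ry^{km}$ terms, and then observing that the lowest-$\prec$ term of $f-f'$ would have to lie in $\{\ry^m,\OO_{\widehat X}\}$, contradicting complementarity. That is again a lex-order induction, not a weight-order one.

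In short: the direct-sum decomposition at the base point is right and matches the paper, but replacing the paper's lexicographic term order by the weight order breaks both the convergence of the normal-form iteration and the Neumann/path step in uniqueness. If you want to keep a weight-based argument you would need a Newton-type (quadratic-precision) iteration together with a separate proof that the transversality propagates along $S$; the paper's lex-order induction achieves the same goal more directly.
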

\begin{proof}
  If the coefficient of $\ry^m$ is zero but not the coefficient of
  $\rx \ry$, we can apply $e^{\xi_{(\ry^m)}}$ to make the coefficient of
  $\ry^m$ nonzero.  Once we have a nonzero coefficient of $\ry^m$, by
  applying rescalings
  $\ry^m \mapsto \gamma \ry^m, \rx^m \mapsto \gamma^{-1} \rx^m$, we can make
  the coefficient of $\ry^m$ one.  Next, we use the lexicographical
  ordering $\prec$ on monomials $\C[[\rx^m,\rx \ry,\ry^m]]$, where
  $\rx^a \ry^b \prec \rx^{a'} \ry^{b'}$ if either $a < a'$ or $a=a'$ and
  $b < b'$. Note that, for $a \neq 0$ and $f \in \C[[\rx^m,\rx \ry,\ry^m]]$ a power series with zero coefficient of $\ry^m$,
\begin{equation}
\{\rx^a \ry^b, \ry^m + f\} = \{\rx^a \ry^b, \ry^m\} + \ldots, \quad e^{\xi_{(\rx^a \ry^b)}} (\ry^m + f) = \{\rx^a \ry^b, \ry^m\} + \ldots,
\end{equation}
where $\ldots$ denotes higher-order terms with respect to $\prec$.
Hence, by applying elements $e^{\xi_{(\rx^a \ry^b)}}$, we can kill off
all monomials which appear in $\{\ry^m, \C[[\rx^m, \rx \ry,
\ry^m]]\}$,
which is a complement to
$\C[[\rx^m]] \langle 1, \rx \ry, (\rx \ry)^2, \ldots, (\rx \ry)^{m-2}
\rangle$.
Hence, any orbit with nonzero coefficient of either $\rx \ry$ or
$\ry^m$ contains a point of the form \eqref{anslice}.

  It remains to prove that this point is unique.  In other words,
  if $g \in \C[[\rx^m,\rx \ry,\ry^m]]$ satisfies
\begin{equation} \label{xigan}
e^{\xi_g}(\ry^m + f) = \ry^m + f', \quad f,f' \in \C[[\rx^m]] \langle 1, \rx \ry, (\rx \ry)^2, \ldots, (\rx \ry)^{m-2} \rangle,
\end{equation}
then $f=f'$.  It suffices to show that, if $f \neq f'$, then the
lowest-order term in $f-f'$ with respect to $\prec$ lies in
$\{\ry^m, \C[[\rx^m,\rx \ry,\ry^m]]\}$ (since this is impossible).

We assume that $f$ and $f'$ have no constant term.
Note that, for any power series $R(\ry^m+f)$ in $\ry^m+f$, the
operator $e^{\xi_{R(\ry^m+f)}}$ fixes $\ry^m + f$.  Using the
Campbell-Baker-Hausdorff formula, we can replace $g$ by an element
$g'$ such that $e^{\xi_{g'}} = e^{\xi_g} e^{\xi_{R(\ry^m+f)}}$.  
Hence, inductively on $\prec$, we
may assume that the coefficient in $g$ of every monomial $\ry^{km}$ is
zero.  In this case, the lowest-order term in $f-f'$ with respect to
$\prec$ appears in $\{\ry^m, g\}$.
\end{proof}
\begin{corollary}
The invariant regular functions $\Fg(X)^{\mathcal{G}_X}$ restrict isomorphically
to the regular functions on the slice \eqref{anslice}.
\end{corollary}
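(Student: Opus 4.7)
My plan is to show that restriction is an isomorphism $r \colon \Fg(X)^{\mathcal{G}_X} \to \C[S]$ of algebras, treating injectivity and surjectivity separately. The central tool is that Proposition \ref{kleinprop} implicitly furnishes a $\mathcal{G}_X$-invariant set-theoretic retraction $\pi \colon U \to S$, where
\begin{equation*}
U := \{ v \in \OO_{\widehat X} : c_{\ry^m}(v) \neq 0 \text{ or } c_{\rx\ry}(v) \neq 0 \}
\end{equation*}
is the open locus on which the slice construction applies. Concretely, $\pi$ composes a conditional $e^{\xi_{\ry^m}}$ (when $c_{\ry^m}=0, c_{\rx\ry}\neq 0$), the $\C^\times$-rescaling $\rx^m \mapsto \gamma^{-1}\rx^m,\ry^m \mapsto \gamma \ry^m$ with $\gamma = c_{\ry^m}^{-1}$, and a sequence (finite in each fixed weight) of Hamiltonian exponentials $e^{\xi_{\rx^a\ry^b}}$ in lex order.

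Injectivity is the easy half. If $\phi \in \Fg(X)^{\mathcal{G}_X}$ restricts to zero on $S$, then $\mathcal{G}_X$-invariance forces $\phi|_U = 0$ since $\mathcal{G}_X \cdot S = U$ by the proposition. Each weight-graded component of $\phi$ is a polynomial in finitely many coefficient coordinates on $\OO_{\widehat X}$, and in the corresponding finite-dimensional truncation $U$ is the complement of the codimension-two subvariety $\{c_{\ry^m}=c_{\rx\ry}=0\}$, which is Zariski dense. Hence $\phi \equiv 0$.

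For surjectivity, given $\psi \in \C[S]$ I take the candidate $\tilde\psi := \pi^*\psi = \psi \circ \pi$, which is automatically $\mathcal{G}_X$-invariant on $U$. In any fixed weight, $\pi$ involves only finitely many coordinates of $v$, and every step of its construction is polynomial except for the normalizing rescaling, which is rational with denominator a power of $c_{\ry^m}$ (or $c_{\rx\ry}$ after the conditional first step). So $\tilde\psi$ is a rational function on each finite-dimensional weight truncation, regular on $U$.

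The main obstacle is to upgrade $\tilde\psi$ from regular on $U$ to an element of $\Fg(X)$. In each fixed weight, the polar locus of $\tilde\psi$ is contained in the codimension-two subvariety $\{c_{\ry^m}=c_{\rx\ry}=0\}$ of a finite-dimensional affine space, so a Hartogs extension yields a polynomial extension to the entire truncation; I would then check that these extensions assemble consistently across weights into a single element of $\Fg(X)$. A structurally cleaner alternative, modeled on the proof of Lemma \ref{vbdlelemma}, would instead filter $\Fg(X)^{\mathcal{G}_X}$ by vanishing order at the base point $\ry^m \in S$, identify the associated graded with $\C[S]$ via the tangent-space decomposition $\OO_{\widehat X} = T_{\ry^m} S \oplus \{\ry^m, \OO_{\widehat X}\}$, and conclude by Claim \ref{grclaim}.
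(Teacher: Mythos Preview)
Your argument is correct and matches the paper's approach: both identify $\mathcal{G}_X$-invariant regular functions on the open set $U=\{c_{\ry^m}\neq 0\text{ or }c_{\rx\ry}\neq 0\}$ with regular functions on the slice (the paper simply cites Proposition \ref{kleinprop} for this, while you construct the retraction explicitly), and then extend across the codimension-two complement by Hartogs. One small point of phrasing: your ``conditional'' $e^{\xi_{\ry^m}}$ is not itself an algebraic map, so to make $\tilde\psi$ rigorously regular on all of $U$ it is cleaner to observe that the unconditional formula is regular on $\{c_{\ry^m}\neq 0\}$ and then use $\mathcal{G}_X$-invariance under the polynomial automorphism $e^{\xi_{\ry^m}}$ to see it is also regular on $\{c_{\rx\ry}\neq 0\}$.
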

\begin{proof}
  By the proposition, restriction to the slice \eqref{anslice} identifies
  regular $\mathcal{G}_X$-invariant functions on
  the subvariety $U$ of $\C[[\rx^m,\rx \ry,\ry^m]]$ consisting of power series
  whose coefficient of $\ry^m$ or $\rx \ry$ is nonzero with regular functions on \eqref{anslice}.
  Since $U$ is the complement of an affine subspace of
  codimension two, all regular (invariant) functions on $U$ extend to
  regular (invariant) functions on all of $\C[[\rx^m,\rx \ry,\ry^m]]$.
\end{proof}
It remains to compute the algebra of regular functions on
\eqref{anslice}. This is a polynomial algebra generated by the
coordinate functions $w_{\rx^r \ry^s}$ of the slice, by which we mean that the
point with coordinates $(w_{\rx^r \ry^s})$ in the slice is
\begin{equation}\label{slicecoord}
\ry^m + \sum_{r,s} w_{\rx^r \ry^s} \rx^r \ry^s.
\end{equation}
These do \emph{not} necessarily have degree one as polynomial
functions on $\OO_{\hat X}$.  To determine the degree we may consider
the value of the slice coordinate $w_{\rx^r \ry^s}$ on $\gamma$ times
\eqref{slicecoord} for arbitrary $\gamma \in \C$.  We can compute this
by applying the element of $\mathcal{G}_X$ which rescales by
$\ry^m \mapsto \gamma^{-1} \ry^m$ and $\rx^m \mapsto \gamma \rx^m$.  We deduce
that the degree of $w_{\rx^r \ry^s}$ is $\frac{r-s}{m}+1$.  This yields
exactly \eqref{mtfla}, proving Theorem \ref{mt} in the $A_{m-1}$ case.

\section{Proof of Theorems \ref{kleindeg} and \ref{defdeg} and Corollary \ref{ellcor}}
To prove this, we first need to recall the structure of the zeroth
Hochschild homology of symmetric products of algebras.  Form the \emph{coalgebra}
\begin{equation}
  \HHH(A) := \bigoplus_{n \geq 0} \HH_0(\Sym^n A),
\end{equation}
where the comultiplication map is given by the symmetrization maps
\begin{equation}
\Sym^{p} A \rightarrow \bigoplus_{m+n=p} \Sym^m A \otimes \Sym^n A.
\end{equation}
(When $\HH_0(A)$ is finite-dimensional, this is dual to the symmetrization
maps that we considered earlier.)
We
then have the following result:
\begin{theorem}\cite[Corollary 3.3]{EO} \label{eothm} Let $A$ be an infinite-dimensional simple algebra over $\C$ with trivial center.
Then, the algebra $\HHH(A)$ is a polynomial coalgebra,
\begin{equation} \label{eofla}
\HHH(A) \cong \Sym (\HH_0(A)[t]),
\end{equation}
where the isomorphism is the unique graded coalgebra map (with
$\HH_0(A)$ and $t$ both in degree one) such that the composition with
the projection to $t^{n-1} \HH_0(A)$,
\begin{equation}
\HH_0(\Sym^n A) \rightarrow \Sym (\HH_0(A)[t]) \onto t^{n-1} \HH_0(A),
\end{equation}
has the form $[a_1 \& \cdots \& a_n] \mapsto \frac{1}{n!}  \sum_{\sigma \in S_n}t^{n-1} [a_{\sigma(1)} \cdots a_{\sigma(n)}]$.
\end{theorem}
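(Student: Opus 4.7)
The plan is to route the computation through the wreath product $B := A^{\otimes n} \rtimes S_n$, whose zeroth Hochschild homology decomposes by conjugacy classes of $S_n$, and then recognize the answer as a polynomial coalgebra via a Cartier--Milnor--Moore argument.

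First I would set up a Morita equivalence $\Sym^n A \sim B$ via the symmetrizer idempotent $e := \frac{1}{n!} \sum_{\sigma \in S_n} \sigma \in B$. The identification $eBe \cong (A^{\otimes n})^{S_n} = \Sym^n A$ is canonical, and the hypothesis that $A$ is infinite-dimensional simple with trivial center propagates to central simplicity of $A^{\otimes n}$ and to the fact that the $S_n$-action permuting tensor factors is by outer automorphisms; together these force $B$ itself to be simple, whence the nonzero idempotent $e$ is full ($BeB = B$). Hence $\HH_0(\Sym^n A) \cong \HH_0(B)$, and one may apply the standard crossed-product decomposition
\begin{equation*}
\HH_0(A^{\otimes n} \rtimes S_n) \;\cong\; \bigoplus_{[g] \in S_n/\sim} \HH_0\bigl(A^{\otimes n},\, (A^{\otimes n})_g\bigr)_{Z_g},
\end{equation*}
where $(A^{\otimes n})_g$ denotes $A^{\otimes n}$ as a bimodule with right action twisted by $g$, and $Z_g$ is the centralizer. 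For a single $m$-cycle $\tau \in S_m$, iterating the twisted commutator relation $c_1 a_1 \otimes \cdots \otimes c_m a_m \equiv a_1 c_m \otimes a_2 c_1 \otimes \cdots \otimes a_m c_{m-1}$ reduces every tensor to the form $1 \otimes \cdots \otimes 1 \otimes x$, giving an identification $\HH_0(A^{\otimes m}, (A^{\otimes m})_\tau) \cong \HH_0(A)$ via $a_1 \otimes \cdots \otimes a_m \mapsto [a_m a_{m-1} \cdots a_1]$, on which the cyclic centralizer $\langle \tau \rangle$ acts trivially. For general $g$ of cycle type $\lambda = (\lambda_1, \ldots, \lambda_k)$, the contribution becomes $\HH_0(A)^{\otimes k}$, and its $Z_g$-invariants form $\bigotimes_{i \geq 1} \Sym^{m_i(\lambda)} \HH_0(A)$. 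Summing over partitions yields
\begin{equation*}
\HH_0(\Sym^n A) \;\cong\; \bigoplus_{\lambda \vdash n} \bigotimes_{i \geq 1} \Sym^{m_i(\lambda)} \HH_0(A),
\end{equation*}
which matches the degree-$n$ component of $\Sym(\HH_0(A)[t])$ when $\HH_0(A)$ and $t$ both live in degree one.

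To upgrade this dimension match to a coalgebra isomorphism, I would endow $\HHH(A)$ with a compatible commutative algebra structure via the concatenation maps $\Sym^p A \otimes \Sym^q A \to \Sym^{p+q} A$, making it a graded connected commutative and cocommutative Hopf algebra. By Milnor--Moore in characteristic zero, it is canonically isomorphic as a Hopf algebra to $\Sym(P)$, where $P := P(\HHH(A))$ is the space of primitives. The partition decomposition above identifies $P$ in degree $n$ with the single-cycle summand $\lambda = (n)$, since every other summand arises as a concatenation product of lower-degree pieces; hence $P \cong \HH_0(A)[t]$ with $\HH_0(A) \cdot t^{n-1}$ in degree $n$. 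Tracking the symmetrizer $e$ through the Morita correspondence then pins down the normalization asserted in the statement, namely that the projection to $t^{n-1} \HH_0(A)$ sends $[a_1 \& \cdots \& a_n]$ to $\frac{1}{n!} \sum_{\sigma \in S_n} t^{n-1}[a_{\sigma(1)} \cdots a_{\sigma(n)}]$.

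The main obstacle is establishing the fullness of $e$ (equivalently, the simplicity of $B$) in Step 1: this is where all three hypotheses on $A$ are essentially used, and the theorem fails without them --- for instance, $A = \C$ makes $B = \C[S_n]$, and $e$ generates only the trivial-representation ideal, far short of all of $B$. The secondary task of verifying $\HH_0(A^{\otimes m}, (A^{\otimes m})_\tau) \cong \HH_0(A)$ is clean once conventions are fixed, since the cyclicity of the identification matches cyclicity of $\HH_0(A)$.
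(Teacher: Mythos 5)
The paper does not supply its own proof of this statement; it is quoted from [EO, Cor.~3.3], with only the reformulation in terms of partitions noted afterward. So there is no in-paper argument to compare against. Your overall route --- Morita equivalence $\Sym^n A \sim A^{\otimes n} \rtimes S_n$, then the conjugacy-class decomposition of $\HH_0$ of a crossed product, then recognition of the answer as a polynomial coalgebra --- is the natural one and is close in spirit to what the cited source does. The reduction of the twisted piece $\HH_0(A^{\otimes m}, (A^{\otimes m})_\tau)$ to $\HH_0(A)$ for an $m$-cycle is also carried out correctly.

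There are, however, two places where the argument as written leaves genuine gaps. The first is the fullness of $e$, equivalently the simplicity of $B = A^{\otimes n}\rtimes S_n$. You rightly flag this as \emph{the} crux and as the point where all three hypotheses enter, but then dispose of it by asserting that ``the $S_n$-action permuting tensor factors is by outer automorphisms'' and that this, together with central simplicity of $A^{\otimes n}$, ``forces $B$ to be simple.'' Neither half is free. Outerness is precisely where infinite-dimensionality is used (for $A=M_k(\C)$ the flip \emph{is} inner by Skolem--Noether and the whole conclusion fails), and it requires an argument: e.g.\ if a nontrivial permutation were $\operatorname{Ad}(u)$ then writing $u=\sum u_\alpha\otimes e_\alpha\otimes\cdots$ in a basis, the relation $u(a\otimes 1\otimes\cdots)=(\cdots\otimes a\otimes\cdots)u$ forces $\operatorname{span}\{u_\alpha\}$ to be a nonzero finite-dimensional right ideal of $A$, which an infinite-dimensional simple $\C$-algebra with center $\C$ cannot have. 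And passing from ``outer'' to ``the crossed product is simple'' is a theorem (Montgomery's, stated for X-outer actions; one must also observe that for a simple unital ring with center a field, X-outer coincides with outer because the symmetric Martindale quotient ring is the ring itself), not a one-line deduction.

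The second gap is the Milnor--Moore step. You endow $\HHH(A)$ with a product via the symmetrization maps $\Sym^p A\otimes\Sym^q A\to\Sym^{p+q}A$; but these are \emph{not} ring homomorphisms, so it is not automatic that they kill commutators and descend to $\HH_0$. (They do, but only after you observe that, under Morita, the concatenation $(A^{\otimes p}\rtimes S_p)\otimes(A^{\otimes q}\rtimes S_q)\hookrightarrow A^{\otimes(p+q)}\rtimes S_{p+q}$ \emph{is} a ring map, and that conjugation by $\sigma\in S_{p+q}$ acts trivially on $\HH_0$, so the resulting product coincides with your symmetrization.) Even granting this, the Hopf compatibility $\Delta(xy)=\Delta(x)\Delta(y)$ between this product and the de-concatenation coproduct still has to be checked before Milnor--Moore applies. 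Given that you already have the explicit partition-indexed description of each $\HH_0(\Sym^n A)$, it is both cleaner and closer to [EO] to verify directly that this decomposition intertwines the de-concatenation coproduct with the coproduct of $\Sym(\HH_0(A)[t])$ and then read off the stated normalization by chasing the single-$n$-cycle summand through the Morita idempotent, rather than routing through a Hopf structure you would first have to build.
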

We note that the above theorem is not stated in quite this way in \cite{EO}, but rather in the equivalent formulation that
\begin{equation}
\HH_0(\Sym^n A) \cong \bigoplus_{\nu \in \mathcal{P}_n} \bigotimes_{i \geq 1} \Sym^{\nu_i} \HH_0(A),
\end{equation}
where $\mathcal{P}_n$ is the set of partitions of $n$, and $\nu_i$
denotes the number of cells of $\nu$ of size $i$.

We have the following immediate corollary:
\begin{corollary}\label{dualcor}
  In the situation of Theorem \ref{eothm}, if $\HH_0(A)$ is
  finite-dimensional, then the commutative algebra
  $\bigoplus_{n \geq 0} \HH_0(\Sym^n A)^*$ is freely generated by the
  vector spaces
  $\langle [f^{\& n}] \mapsto T([f^n]) \rangle_{T \in \HH_0(A)^*}
  \subseteq \HH_0(\Sym^n A)^*$.
\end{corollary}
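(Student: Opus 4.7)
The plan is to dualize Theorem~\ref{eothm} using the hypothesis that $\HH_0(A)$ is finite-dimensional. Under this assumption, all graded pieces of $\Sym(\HH_0(A)[t])$ are finite-dimensional, so the coalgebra isomorphism of Theorem~\ref{eothm} may be dualized degree-by-degree to produce an isomorphism of graded algebras
\begin{equation*}
\bigoplus_{n \geq 0} \HH_0(\Sym^n A)^* \;\cong\; \Sym(\HH_0(A)[t])^*.
\end{equation*}

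The first step is to invoke the standard fact that, in characteristic zero with finite-dimensional graded components, the graded dual of a polynomial coalgebra $\Sym W$ on primitive generators $W$ is canonically isomorphic to the polynomial algebra $\Sym(W^*)$, via the inclusion $W^* \hookrightarrow (\Sym W)^*$ obtained by extending a linear functional on $W$ by zero on $\Sym^{\neq 1} W$ in the same degree (equivalently, by composing with the natural projection $\Sym W \to W$ onto the primitive part). Applying this with $W = \HH_0(A)[t]$ exhibits $\bigoplus_{n \geq 0} \HH_0(\Sym^n A)^*$ as a polynomial algebra whose generating subspace of degree $n$ is a copy of $\HH_0(A)^*$, embedded in $\HH_0(\Sym^n A)^*$ by composing each $T \in \HH_0(A)^*$ with the projection $\HH_0(\Sym^n A) \to t^{n-1}\HH_0(A) \cong \HH_0(A)$ furnished by Theorem~\ref{eothm}.

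The second step is to identify this generator with the functional described in the statement. By the explicit formula in Theorem~\ref{eothm}, the projection sends $[a_1 \& \cdots \& a_n]$ to $\tfrac{1}{n!} \sum_{\sigma \in S_n} t^{n-1}[a_{\sigma(1)} \cdots a_{\sigma(n)}]$, so in particular $[f^{\& n}] \mapsto t^{n-1}[f^n]$. Composing with $T$ yields precisely the element of $\HH_0(\Sym^n A)^*$ given by $[f^{\& n}] \mapsto T([f^n])$, matching the statement of the corollary.

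I expect no genuine obstacle: once Theorem~\ref{eothm} is in hand, the corollary is a formal consequence of the self-duality of $\Sym W$ in characteristic zero together with the explicit formula for the projection onto primitive generators. The only mild point to keep in mind is that a functional of the form $[f^{\& n}] \mapsto T([f^n])$ automatically extends (uniquely) to all of $\HH_0(\Sym^n A)$, since it is manufactured as the composition of the projection with $T$; alternatively, one may note that polarization in characteristic zero shows that the elements $[f^{\& n}]$ span $\HH_0(\Sym^n A)$, so such values determine the functional.
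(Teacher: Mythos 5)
Your argument is correct, and it is exactly the dualization that the paper has in mind when it calls the corollary ``immediate'': since $\HH_0(A)$ (and hence each $\HH_0(\Sym^n A)$) is finite-dimensional, the coalgebra isomorphism of Theorem~\ref{eothm} dualizes to an algebra isomorphism onto $\Sym(\HH_0(A)[t])^*\cong\Sym(\HH_0(A)^*[t])$, and the explicit formula for the projection onto $t^{n-1}\HH_0(A)$ identifies the degree-$n$ generators with the functionals $[f^{\&n}]\mapsto T([f^n])$. The paper gives no separate proof, so there is nothing further to compare; your remark about polarization (or, equivalently, that the functional is manufactured as $T$ composed with the projection) correctly disposes of the only potential subtlety in reading the notation.
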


\begin{proof}[Proof of Theorem \ref{kleindeg}]
  The algebra $\Weyl_{2n}^H$ is well-known to be simple for all finite
  groups $H < \Sp_{2n}$.  Indeed, since $\Weyl_{2n}$ is simple, so is
  the smash-product algebra $\Weyl_{2n} \rtimes H$, and this is
  therefore Morita equivalent to $\Weyl_{2n}^H$.  Obviously,
  $\Weyl_{2n}^H$ is also infinite-dimensional. Since, for finite
  $G< \Sp_2$ and $X = \C^2/G$, it is known that
  $\HH_0(\Weyl_{2}^G) \cong \HP_0(\OO_{X})$, (e.g. by comparing
  \cite{AFLS} for the former with the formulas mentioned in the
  introduction for the latter), the theorem follows from Theorem
  \ref{eothm} and the main Theorem \ref{mt}.
\end{proof}

\begin{proof}[Proof of Theorem \ref{defdeg}]
  Although the deformation quantization of $\OO_X$ is not, in general,
  simple, we may deform $X$ to $Z(Q-\lambda)$, which is symplectic for
  $\lambda \neq 0$.  Let $A_\lambda := \C[x,y,z]/(Q-\lambda)$, and let
  $A_{\lambda, \hbar}$ be its deformation quantization; the algebra
  $A_{\lambda,\hbar}[\hbar^{-1}]$ is simple.  By results of
  Nest-Tsygan \cite{NeTs}, we have
  $\HH_0(A_{\lambda, \hbar}[\hbar^{-1}]) \cong
  \HP_0(A_\lambda)((\hbar))$
  for $\lambda \neq 0$, i.e., the Brylinski spectral sequence
  degenerates. Moreover, the Betti numbers of $A_{\lambda}$ are
  $1, 0,$ and $\mu_Q$, where $\mu_Q = \dim \C[x,y,z]/(Q_x, Q_y, Q_z)$
  is the Milnor number of $X$.  Hence,
  $\dim_{\C((\hbar))} \HH_0(A_{\lambda, \hbar}[\hbar^{-1}]) = \dim
  \HP_0(A_0)$.
  By Theorem \ref{eothm} and Theorem \ref{mt}, we deduce that
  $\HH_0(\Sym A_{\lambda, \hbar}[\hbar^{-1}]) \cong \HP_0(\Sym
  A_0)((\hbar))$
  as graded algebras (with degree $n$ corresponding to $\Sym^n$, so
  not looking at the grading on $A_0$ yet).  However, as
  $\lambda \rightarrow 0$,
  $\HH_0(\Sym^n A_{\lambda, \hbar}[\hbar^{-1}])$ can only increase in
  dimension over $\C((\hbar))$, but the Brylinski spectral sequence
  shows that
  $\dim_{\C((\hbar))} \HH_0(\Sym^n A_{0,\hbar}[\hbar^{-1}]) \leq \dim
  \HP_0(\Sym^n A_0)$
  for all $n$.  Hence, the dimensions are equal, and the Brylinski
  spectral sequence degenerates.
\end{proof}

\begin{proof}[Proof of Corollary \ref{ellcor}] In a formal punctured
  neighborhood of $\gamma = 0$, we see from the above that the zeroth
  Hochschild homology is constant and as above.  Hence, by a standard
  argument, the same is true when $\hbar$ is replaced by actual values
  of $\gamma$ that do not obey a countable number of polynomial
  equations, i.e., for all but countably many $\gamma$, and the zeroth
  Hochschild homology is as above.
\end{proof}
\begin{remark}\label{noncanrem} We deduce from the above that, for the elliptic
  algebras $A_\gamma$, the two sides of \eqref{eofla} are abstractly
  isomorphic as bigraded algebras (Corollary \ref{ellcor}).
  However, the map defined in Theorem \ref{eothm} is \emph{not} an
  isomorphism:  for example, consider $T \in \HH_0(A)^*$ which
  takes the degree-zero coefficient of an element of $A/[A,A]$. Then,
  $T([a^2]) = T([a])^2$, so the map is not an isomorphism
  (cf.~Corollary \ref{dualcor}).  Moreover, the map of Theorem
  \ref{eothm} is not even a bigraded map: while it preserves degree
  (and is thus a graded map), it does not preserve weight: while
  $|t^{n-1} T| = -(n-1)d+|T|$ on the RHS for $T \in
  \HH_0(A_\gamma)^*$,
  the element $[f^{\& n}] \mapsto T([f^n])$ has degree $|T|$ on the
  LHS. So the fact that an isomorphism between the two sides of
  \eqref{eofla} exists is subtle and there may not be a canonical one.
\end{remark}

\section{Acknowledgements}
We thank Qingchun Ren for writing computer programs which allowed us
to compute $\HP_0$ for certain surfaces in low degrees (as part of an
MIT UROP with the authors).  We are grateful to J. Alev for useful
discussions.  The first author is partially supported by NSF grant
DMS-0504847.  The second author is a five-year fellow of the American
Institute of Mathematics.

\bibliographystyle{amsalpha}
\bibliography{master}
\end{document}